\providecommand{\U}[1]{\protect\rule{.1in}{.1in}}
\newcommand{\R}{\mathbb{R}}
\newcommand{\N}{\mathbb{N}}
\newcommand{\Z}{\mathbb{Z}}
\newcommand{\C}{\mathbb{C}}
\newcommand{\F}{\mathscr{F}}
\newcommand{\dq}{\partial_q}
\providecommand{\U}[1]{\protect\rule{.1in}{.1in}}
\providecommand{\U}[1]{\protect\rule{.1in}{.1in}}
\newcommand{\ds}{\displaystyle}
\newtheorem{theorem}{Theorem}[section]
\newtheorem{lemme}{Lemma} [section]
\newtheorem{coro}{Corollary} [section]
\newtheorem{remark}{Remark} [section]
\numberwithin{equation}{section}
\begin{document}
\title[Real Paley-Wiener Theorem ]{Real Paley-Wiener Theorem for the Generalized Weinstein transform in quantum calculus}
\author[Y. Bettaibi \& H. Ben Mohamed]{}
\maketitle

\centerline{\bf Youssef Bettaibi$^1$}

\centerline{E-mail : youssef.bettaibi@yahoo.com}

\centerline{\bf Hassen Ben Mohamed$^1$}
\centerline{E-mail : hassenbenmohamed@yahoo.fr}

$^1${University of Gabes, Faculty of Sciences of Gabes, LR17ES11 Mathematics and Applications, 6072, Gabes, Tunisia.
.
}

\begin{abstract}
We first characterize the image of the compactly supported smooth even functions
under the q-Weinstein transform as a subspace of the Schwartz space. We then describe the space of
smooth $L_{\alpha, q, a}^{2}$-functions whose q-Weinstein transform has compact support as a subspace of the space of
$L_{\alpha, q, a}^{2}$-functions.
\end{abstract}
\noindent {\bf Keywords :} {$q$-theory, Weinstein transform, $q$-integral transform} \\
{\bf 2010 AMS Classification : } {33D15; 33E20; 33D60; 42B10}

\section{Introduction}
The original Paley-Wiener theorem \cite{RPW1} describes the Fourier transform of $L^{2}$ -functions
on the real line with support in a symmetric interval as entire functions of exponential type whose restriction to the real line are $L^{2}$-functions, which has proved to be a basic tool for transform in various set-ups. Recently, there has been a great interest
in the real Paley-Wiener theorem due to Bang \cite{RPW2}, in which the adjective "real" expresses that information about the support of the Fourier transform comes from growth
rates associated to the function $f$ on $\mathbb{R},$ rather than on $\mathbb{C}$ as in the classical "complex Paley-Wiener theorem". Bang \cite{RPW2} discovered a characterization of band-limited signals by
using a derivative operator, whose result can be rephrased as $f \in L^{2}(\mathbb{R})$ is band-limited
with bandwidth $\sigma$ if and only if $f$ infinitely differentiable, $\frac{d^{m}f}{d t^{m}} \in L^{2}(\mathbb{R})$ for all positive integers $m$ and $$\lim _{m \rightarrow \infty}\left\|\frac{\mathrm{d}^{m}}{\mathrm{d} t^{m}} f\right\|_{L^{2}(\mathbb{R})}^{1 / m}=\sigma=\sup \{|\xi|: \xi \in \operatorname{supp} \mathcal{F}( f)\},$$ where $\mathcal{F}( f)$ is the Fourier transform of $f.$ A wide number of papers have been devoted to the extension of the theory on higher dimensions and many other integral transforms (see\cite{RPW3}, \cite{RPW13}, \cite{RPW22}, and the references therein).\\

A class of Paley-Wiener theorems sitting inside the Schwartz space was obtained by Andersen
in \cite{RPW5}, where it is shown that the Fourier transform is a bijection between smooth functions
supported in $[-R, R]$ and the space of all Schwartz functions satisfying, for all $m \in \N$
$$
\sup _{x \in \R, n \in \mathbb{N}_{0}} R^{-n} n^{-N}(1+|x|)^{N}\left|\frac{d^{n}}{d x^{n}} f\right|<\infty.
$$
Following the classical theory, an element of $P W_{a}$ will be called
bandlimited signal. \\
papers have been devoted to the extension of the theory on many other transforms and different
classes of functions, for example, Hankel transform (see \cite{RPWH}) and the Weinstein transform (see \cite{RPWW})
.\\

In the literature, these theorems are known to hold for more general transforms in classical analysis as well as in Quantum Calculus for example
the Real Paley-Wiener Theorem for the q-Dunkl transform(see \cite{RPWD}), q-Hankel transform(see \cite{RPWH})\\

In \cite{youss}, we introduce a q-analogue of the Weinstein operator and we investigate its eigenfunction. Next, we study its associated Fourier transform which is a q-analogue of the Weinstein transform.\\
In this paper, we shall continue their work by giving two real Paley-Wiener theorems for the $q$-Weinstein transform. The first uses techniques due to Tuan and Zayed \cite{RPW}, in order to describe the image under the $q$-Weinstein transform $\F_{W}^{\alpha, q}$ of $L_{\alpha, q,a}^{2}$ (the space of square integrable functions on $B_{(0,a)}$ with respect to the measure $\left.x_{2}^{2 \alpha+1} d_{q} x_{1}d_{q} x_{2}, \alpha \geq-1 / 2\right) .$ The second characterizes the image of the compactly supported q-smooth functions domain under $\F_{W}^{\alpha, q}$

 This paper is organized as follows: in Section $2,$ we present some standard conventional notations used in the sequel. In Section $3,$ we will mention some results and definitions from the theory of $q$-Weinstein operator and $q$-Weinstein transform. All of these results can be found in \cite{youss}. Section 4 is devoted to study the real Paley-Wiener theorem for $q-L^{2}$-functions. Finally in Section $5,$ we give a real Paley-Wiener theorem for the $q$-Schwartz functions.
\section{Notations and preliminaries}
For the convenience of the reader, we provide in this section a
summary of the mathematical notations and definitions used in this
paper. We refer the reader to the general references \cite{GR} and
\cite{KC}, for the definitions, notations and properties of the
$q$-shifted factorials and the $q$-hypergeometric functions.
Throughout this paper, we assume $q\in]0, 1[$ and we  denote \\
$\bullet\ds~\R_q=\{\pm q^n~~;~~n\in\Z\}$,  $\ds \R_{q,+}=\{q^n~~;~~n\in\Z\}$.\\
$\bullet \ {\R_q^{2}}=\R_q\times\R_{q}$ and $ {\R_{q,+}^{2}}=\R_q\times\R_{q,+}$  \\
$\bullet ~x= (x_1,x_2)\in\R^2, -x =(-x_1,x_2)$ and $ \| x\|=\sqrt{x_1^{2}+x_2^{2}}$ \\
$\bullet$ For $a>0 $, $B_{(0,a)}=\Big\{x\in{\R_q^{2}},\quad \|x\|\leq a\Big\}$, $\quad B_{+(0,a)}=B_{(0,a)}\bigcap{\R_{q,+}^{2}} $\\

\subsection{Basic symbols}
For $x \in \C $, the $q$-shifted factorials are defined by
\begin {equation*}
 (x;q)_0=1;~~ (x;q)_n = \ds\prod _{k=0}^{n-1}(1-xq^k),~~
 n=1,2,...;~~  (x;q)_\infty = \ds\prod _{k=0}^\infty(1-xq^k).
\end {equation*}
We also denote
 \begin {equation*}
 [x]_q={{1-q^x}\over{1-q}},\quad ~ x\in \C\quad {\rm and}\quad [n]_q! ={{(q;q)_n}\over
 {(1-q)^n}},  \quad ~~ n\in \N.
\end {equation*}

\subsection{ Operators and elementary special  functions}~~\\
 The  $q$-Gamma function is given by (see  \cite{Jac} )
$$
\Gamma_q (x) ={(q;q)_{\infty}\over{(q^x;q)_{\infty}}}(1-q)^{1-x} ,
~~ ~~x\neq 0, -1 , -2 ,...
$$
It satisfies the  following relations \begin {equation*}
\label{gam1} \Gamma_q (x+1) =[x]_q \Gamma_q (x) ,~~ \Gamma_q (1) =
1~~ \hbox{ and  } \lim_{q\longrightarrow 1^-}\Gamma_q (x) = \Gamma(x) ,
\Re(x) >0.
\end {equation*}
The third Jackson's normalized Bessel function  is given by ( see \cite {Rubin})
\begin{equation}\label{j}
j_\alpha(x;q^2) =  \ds \sum_{n=0}^{+\infty}
(-1)^n \frac{\Gamma_{q^2}(\alpha+1)q^{n(n+1)}}{(1+q)\Gamma_{q^2}(\alpha+n+1)\Gamma_{q^2}(n+1)}x ^{2n},
\end{equation}
 the $q$-trigonometric functions $q$-cosine and $q$-sine are
defined by ( see \cite {Rubin})
\begin {equation}\label{cos}
 \cos(x;q^2)=j_{-\frac{1}{2}}\left(x ; q^{2}\right) \quad{\rm ,
}\quad\sin(x;q^2)=xj_{\frac{1}{2}}\left(x ; q^{2}\right)
\end {equation}
 and the $q$-analogue exponential function is given by ( see \cite {Rubin})
\begin{align}\label{exp}
e(z;q^2) =\cos(-iz;q^2)+i\sin(-iz;q^2)
\end{align}
These functions are absolutely convergent for all $z$ in the
plane and when $q$ tends to 1 they tend to the corresponding
classical ones  pointwise and uniformly on compacts.\\
Note that we have for all $x \in \R_q$ (see \cite{Rubin})
\begin{equation}\label{iexp}
 |j_\alpha(x;q^2)|\leq
\frac{1}{(q;q)_\infty} \quad \hbox{and} \quad   |\ e(ix;q^2)|\leq \ds \frac{2}{(q;q)_\infty}.
\end{equation}
The $q^2$-analogue differential operator is ( see \cite {Rubin})
\begin {equation*}\partial_q(f)(z)=\left\{\begin{array}{cc}
                   \ds\frac{f\left(q^{-1}z\right)+f\left(-q^{-1}z\right)-f\left(qz\right)+f\left(-qz\right)-2f(-z)}{2(1-q)z}
 & if~~z\neq 0 \\
                   \ds\lim_{x\rightarrow 0}\partial_q(f)(x)\qquad({\rm in}~~ \R_q) & if~~z= 0. \\
                 \end{array}\right.
\end {equation*}

\begin{remark} If $f$ is differentiable at $z$, then $\ds
\lim_{q\rightarrow1}\partial_q(f)(z)=f'(z)$.\\
A repeated application of the $q^2$-analogue differential operator
$n$ times is given by:$$ \partial_q^0f=f,\quad
\partial_q^{n+1}f=\partial_q(\partial_q^nf). $$
For $\beta=(\beta_1,\beta_2)\in \N\times\N$, we use the notation $$D_q^{\beta}=\partial_{x,q}^{\beta_1}\partial_{y,q}^{\beta_2} .$$
 The $q^2$-analogue Laplace operator or $q$-Laplacian is given by
$$\Delta_q= \partial_{x,q}^2+\partial_{y,q}^2.$$
\end{remark}
 The following lemma lists some useful computational
properties of $\dq$, and reflects the sensitivity of this operator
to parity of its argument. The proof is straightforward.
\begin{lemme}\label{ld}~~\\
1) We have \begin{align*}
  & \dq \sin(x;q^2)=\cos(x;q^2),\quad \dq \cos(x;q^2)=-\sin(x;q^2), \\
   & \dq e(x;q^2)=e(x;q^2) \quad\text{and}\quad
\partial_q j_\alpha(x;q^2)=-\frac{x}{[2\alpha +2]_q}j_{\alpha +1}(x;q^2).
\end{align*}
2)  For all function $f$ we have $$\ds \dq
f(z)=\frac{f_e(q^{-1}z)-f_e(z)}{(1-q)z}+\frac{f_o(z)-f_o(qz)}{(1-q)z}.$$
Here,  for a function $f$ defined on $\R_q$, $f_e$ and  $f_o$ are
its  even and odd parts respectively.\\
3) Let $f$ and $g$ two functions. \\
  \indent i) If $f$ even and $g$ odd we have
  $$
  \partial_q(fg)(z)=q\partial_q(f)(qz)g(z)+
  f(qz)\partial_q(g)(z) =\partial_q(g)(z))f(z)+
  qg(qz)\partial_q(f)(qz);$$
\indent ii) If $f$ and $g$ are even we have $$
  \partial_q(fg)(z)=\partial_q(f)(z)g(q^{-1}z)+
  f(z)\partial_q(g)(z).$$

iii) If $f$ and  $g $
  		 are odd:
\begin{equation*}
\dq (f g)(x)=\dq(f)(x) g\left(\frac{x}{q}\right)+f(x) \dq(g)(x)- \frac{f(x)}{x}\left(g\left(\frac{x}{q}\right)+g(x)\right)
\end{equation*}

\end{lemme}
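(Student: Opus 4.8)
The proof rests on part (2), which I would establish first as the master identity; parts (1) and (3) then all follow from it by direct substitution. To prove (2), write $f=f_e+f_o$ with $f_e(z)=\tfrac12(f(z)+f(-z))$ and $f_o(z)=\tfrac12(f(z)-f(-z))$, and insert this decomposition into the defining formula for $\dq$. Using $f_e(-w)=f_e(w)$ and $f_o(-w)=-f_o(w)$, the terms combine: the odd parts in $f(q^{-1}z)+f(-q^{-1}z)$ cancel to leave $2f_e(q^{-1}z)$, the even parts in $-f(qz)+f(-qz)$ cancel to leave $-2f_o(qz)$, and $-2f(-z)=-2f_e(z)+2f_o(z)$. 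Collecting the survivors gives the numerator $2\bigl(f_e(q^{-1}z)-f_e(z)\bigr)+2\bigl(f_o(z)-f_o(qz)\bigr)$, and dividing by $2(1-q)z$ yields the claimed formula. In particular, for $f$ even one gets the reduced form $\dq f(z)=\frac{f(q^{-1}z)-f(z)}{(1-q)z}$ and for $f$ odd $\dq f(z)=\frac{f(z)-f(qz)}{(1-q)z}$; these two reduced forms are the only facts used below.

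For part (1) I would treat the even functions first. Since $j_\alpha(\cdot;q^2)$ is a power series in $x^2$, the even reduced form of (2) turns $\dq$ into a term-by-term difference quotient; applied to $x^{2n}$ it produces the factor $\frac{q^{-2n}-1}{1-q}$, and rewriting this through $[n]_{q^2}$ (together with the functional equation $\Gamma_{q^2}(t+1)=[t]_{q^2}\Gamma_{q^2}(t)$) lets me shift $n\mapsto n-1$ and recognize the series of $j_{\alpha+1}$, giving $\dq j_\alpha=-\frac{x}{[2\alpha+2]_q}j_{\alpha+1}$. The cosine formula is then the special case $\alpha=-\tfrac12$, where $[2\alpha+2]_q=[1]_q=1$, and the sine formula follows by writing $\sin(x;q^2)=x\,j_{1/2}(x;q^2)$ and applying the even$\times$odd product rule (3)(i), which rests only on (2) and so is available at this stage. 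Finally $\dq e(x;q^2)=e(x;q^2)$ follows from linearity, the scaling rule $\dq\bigl(f(\lambda\,\cdot)\bigr)(z)=\lambda\,(\dq f)(\lambda z)$ (immediate from (2)), and the definition $e=\cos(-i\,\cdot;q^2)+i\sin(-i\,\cdot;q^2)$ combined with the $\sin/\cos$ rules just proved.

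For part (3) the scheme is uniform: determine the parity of $fg$, apply the appropriate reduced form of (2) to $\dq(fg)$, then replace $\dq f$ and $\dq g$ by their own reduced forms and verify the identity by inserting and cancelling a single cross term. For (i), $fg$ is odd, so $\dq(fg)(z)=\frac{f(z)g(z)-f(qz)g(qz)}{(1-q)z}$; writing the numerator as $\bigl(f(z)-f(qz)\bigr)g(z)+f(qz)\bigl(g(z)-g(qz)\bigr)$ and noting $q\,\dq f(qz)=\frac{f(z)-f(qz)}{(1-q)z}$ for $f$ even reproduces both stated forms. Case (ii), with $fg$ even, is the analogous one-line telescoping. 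The main obstacle is case (iii): here $f,g$ are odd but $fg$ is even, so $\dq(fg)$ is governed by evaluations at $q^{-1}z$ while $\dq f,\dq g$ for odd functions involve evaluations at $qz$. Reconciling this scaling mismatch is what forces the correction term $-\frac{f(x)}{x}\bigl(g(x/q)+g(x)\bigr)$, and arranging the telescoping so that all unwanted shifted evaluations cancel against it is the one genuinely delicate computation; I would organize it by first expressing $f(x/q)g(x/q)-f(x)g(x)$ as a single telescoped difference and then substituting the odd reduced forms of $\dq f(x)$ and $\dq g(x)$.
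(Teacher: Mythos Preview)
The paper offers no proof of this lemma beyond the sentence ``The proof is straightforward,'' so there is nothing substantive to compare against; your approach---establishing part (2) first as the master identity and then reading off (1) and (3) by substitution and telescoping---is exactly the kind of direct calculation the paper is gesturing at, and your outline is correct. Your decision to isolate the two reduced forms (even: $\dq f(z)=\frac{f(q^{-1}z)-f(z)}{(1-q)z}$; odd: $\dq f(z)=\frac{f(z)-f(qz)}{(1-q)z}$) before attacking (1) and (3) is a clean organizing principle that the paper leaves implicit.
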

By the use of the $q^2$-analogue differential operator $\dq$, we
note:\\
$ \bullet$ $ \mathscr{E}_{q}(\R_q^2)$, the space of functions $f$
defined on $\R_q\times\R_q$, satisfying for all $n\in\N$ and all $a \geq 0$,
$$
P_{n,a}(f) = \sup\left\{ |D _{q}^{\beta}f(x)|, \mid \beta\mid\leq n; x\in \mathbb{R}_{q}^2:\|x\|\leq a\right\}<\infty $$ and  $$ \lim_{x\rightarrow (0,0)}D _{q}^{\beta} f(x)\quad (\hbox{ in}\quad
\mathbb{R}_{q}^2)\qquad \hbox{
	exists}.$$
We provide it with the topology  defined by the semi
norms $P_{n,a}.$ \\
$ \bullet$ $\mathscr{E}_{\ast ,q}(\R_q^2)$, the space of functions in
$\mathscr{E}_{q}(\R_q\times\R_q)$, even with respect to the last
variable. \\
$ \bullet$ $ \mathscr{S}_{q}(\mathbb{R}_{q}^2)$, the space of functions $f$
defined  on $\mathbb{R}_{q}^2$ satisfying
$$\forall n\in\N,\quad P_{n,q}(f)=\sup_{x\in\mathbb{R}_{q}^2}\sup_{|\beta|\leq n}\left| D _{q}^\beta\left[\|x\|^{2n} f(x)\right]\right|<+\infty$$
and $$ \lim_{x\rightarrow (0,0)}D _{q}^{\beta} f(x)\quad ({\rm in}\quad
\mathbb{R}_{q}^2)\qquad {\rm
	exists}.$$
$\bullet$ $ \mathscr{S}_{\ast,q} (\mathbb{R}_{q}^2)$, the space of functions in
$\mathscr{S}_{q}(\R_q^2)$, even with respect to the last
variable. \\
$\bullet$ $ \mathscr{D}_{q}\left(\mathbb{R}_{q}^2\right),$ the space of the restrictions on $\mathbb{R}_{q}^2$ of infinity q-differentiable functions
on $\mathbb{R}_{q}^2$ with compact supports.\\
$\bullet$ $ \mathscr{D}_{a,q} (\mathbb{R}_{q}^2)$, the space function in
$ \mathscr{D}_{q}(\mathbb{R}_{q}^2),$ supported in $B_{(0,a)}$.\\
$\bullet$ $ \mathscr{D}_{\ast ,q} (\mathbb{R}_{q}^2)$, the space function in
$\mathscr{D}_{q}(\mathbb{R}_{q}^2) $, even with respect to the last variable. \\

$\bullet$ $ \mathscr{D}_{\ast ,a,q} (\mathbb{R}_{q}^2)$, the space function in
$ \mathscr{D}_{\ast ,q}(\mathbb{R}_{q}^2),$ supported in $B_{(0,a)}$.\\

 The $q$-Jackson integrals are defined
by (see \cite {Jac})  {\small\begin {equation}\label{int1}
\int_0^a{f(x)d_qx} =(1-q)a \sum_{n=0}^{\infty}q^nf(aq^n),\quad
\int_a^b{f(x)d_qx} =\int_0^b{f(x)d_qx}-\int_0^a{f(x)d_qx},
\end {equation}}
\begin {equation}\label{int2}
\int_{\R_{q,+}}f(x)d_qx=\int_0^{\infty}f(x)d_qx
=(1-q)\sum_{n=-\infty}^{\infty}q^nf(q^n)\quad
\end {equation}
and \begin {equation}\label{int2019}
\int_{\R_{q}}f(x)d_qx=\int_{-\infty}^{\infty}f(x)d_qx
=(1-q)\sum_{n=-\infty}^{\infty}q^nf(q^n) +(1-q)\sum_{n=-\infty}^{\infty}q^nf(-q^n),
\end {equation}
provided the sums converge absolutely. \\
The following simple result, giving  $q$-analogues of the
integration by parts theorem, can be verified by direct
calculation.
\begin{lemme}\label{ppar}~~\\
1) For $a>0$, if $\ds \int_{-a}^a (\dq f)(x)g(x)d_qx$ exists, then
{\small \begin{equation*} \int_{-a}^a (\dq
f)(x)g(x)d_qx=2\left[f_e(q^{-1}a)g_o(a)+f_o(a)g_e(q^{-1}a)\right]-\int_{-a}^a
f(x)(\dq g)(x)d_qx.
\end{equation*}}
2) If $\ds \int_{-\infty}^\infty (\dq f)(x)g(x)d_qx$ exists,
\begin{equation}\label{006}
\int_{-\infty}^\infty (\dq f)(x)g(x)d_qx=-\int_{-\infty}^\infty
f(x)(\dq g)(x)d_qx.
\end{equation}
\end{lemme}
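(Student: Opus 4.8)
The plan is to reduce both identities to manipulations of the Jackson sums, closing them by a discrete summation by parts. Write $x_n=aq^n$, so that the symmetric Jackson integral reads $\int_{-a}^a h(x)\,d_qx=(1-q)a\sum_{n=0}^{\infty}q^n\big[h(x_n)+h(-x_n)\big]$, and the whole question is what happens to the summand when $h=(\partial_q f)g$. The decisive input is the even/odd form of $\partial_q$ given in Lemma~\ref{ld}, 2). Setting $A_n=\frac{f_e(x_{n-1})-f_e(x_n)}{(1-q)x_n}$ and $B_n=\frac{f_o(x_n)-f_o(x_{n+1})}{(1-q)x_n}$, that formula gives $\partial_q f(x_n)=A_n+B_n$, while evaluating the same formula at $-x_n$ and using that $f_e$ is even and $f_o$ is odd gives $\partial_q f(-x_n)=-A_n+B_n$. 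Writing $g=g_e+g_o$ so that $g(\pm x_n)=g_e(x_n)\pm g_o(x_n)$, the two contributions combine as
\[
\partial_q f(x_n)g(x_n)+\partial_q f(-x_n)g(-x_n)=2A_n\,g_o(x_n)+2B_n\,g_e(x_n);
\]
the even-derivative piece pairs with the odd part of $g$ and conversely.

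First I would substitute this into the Jackson sum. The weight $(1-q)aq^n$ exactly cancels the denominator $(1-q)x_n$ hidden in $A_n$ and $B_n$, so that
\[
\int_{-a}^a(\partial_q f)\,g\,d_qx=2\sum_{n=0}^{\infty}\Big\{[f_e(x_{n-1})-f_e(x_n)]g_o(x_n)+[f_o(x_n)-f_o(x_{n+1})]g_e(x_n)\Big\}.
\]
Repeating the computation with the roles of $f$ and $g$ interchanged yields the same formula for $\int_{-a}^a f\,(\partial_q g)\,d_qx$ with summand $[g_e(x_{n-1})-g_e(x_n)]f_o(x_n)+[g_o(x_n)-g_o(x_{n+1})]f_e(x_n)$.

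The heart of the matter is then to add the two integrals term by term. Collecting the $(f_e,g_o)$ terms and cancelling gives the telescoping difference $f_e(x_{n-1})g_o(x_n)-f_e(x_n)g_o(x_{n+1})=c_n-c_{n+1}$ with $c_n=f_e(x_{n-1})g_o(x_n)$, and collecting the $(f_o,g_e)$ terms gives $f_o(x_n)g_e(x_{n-1})-f_o(x_{n+1})g_e(x_n)=d_n-d_{n+1}$ with $d_n=f_o(x_n)g_e(x_{n-1})$. Summing the two telescopes from $n=0$ leaves only the bottom terms $c_0=f_e(q^{-1}a)g_o(a)$ and $d_0=f_o(a)g_e(q^{-1}a)$: the top ends $f_e(x_n)g_o(x_{n+1})$ and $f_o(x_{n+1})g_e(x_n)$ tend to $0$ as $n\to\infty$, since $x_n\to0$ while $f_o(0)=g_o(0)=0$ (odd parts vanish at the origin) and the even parts stay bounded. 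Hence $\int_{-a}^a(\partial_q f)g\,d_qx+\int_{-a}^a f(\partial_q g)\,d_qx=2[f_e(q^{-1}a)g_o(a)+f_o(a)g_e(q^{-1}a)]$, which is exactly part 1).

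For part 2) I would let $a\to\infty$ along $a=q^{-N}$, $N\to\infty$, so that $\int_{-q^{-N}}^{q^{-N}}\to\int_{-\infty}^{\infty}$. Part 1) then reads $\int_{-q^{-N}}^{q^{-N}}(\partial_q f)g\,d_qx+\int_{-q^{-N}}^{q^{-N}}f(\partial_q g)\,d_qx=2[f_e(q^{-N-1})g_o(q^{-N})+f_o(q^{-N})g_e(q^{-N-1})]$, and passing to the limit gives \eqref{006} provided the right-hand boundary term vanishes. I expect this last point, rather than the algebra, to be the only real obstacle: the bookkeeping that splits each integral into two sums and rearranges them into telescopes is legitimate only under absolute convergence, and one must argue that the assumed existence of the two integrals forces the general term of their Jackson sums, and hence the boundary term at infinity, to tend to $0$. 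Everything else is the routine cancellation already announced in the statement.
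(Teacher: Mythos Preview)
The paper does not actually supply a proof of this lemma: it simply states that the result ``can be verified by direct calculation'' and moves on. Your proposal \emph{is} that direct calculation, carried out in full: you expand the symmetric Jackson integral as a sum over $x_n=aq^n$, use the even/odd form of $\partial_q$ from Lemma~\ref{ld}\,(2) to identify the summand, and then telescope. This is exactly the computation the paper has in mind, and your algebra is correct.

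The only places where you go beyond pure bookkeeping are the two limiting claims: that $c_n=f_e(x_{n-1})g_o(x_n)$ and $d_n=f_o(x_n)g_e(x_{n-1})$ tend to $0$ as $n\to\infty$, and that the boundary term $f_e(q^{-N-1})g_o(q^{-N})+f_o(q^{-N})g_e(q^{-N-1})$ vanishes as $N\to\infty$ in part~2). You are right to flag these; strictly speaking they do not follow from the bare hypothesis ``the integral exists'', but in the paper the lemma is only ever applied to functions in $\mathscr{S}_{*,q}(\mathbb{R}_q^2)$ or $\mathscr{D}_{*,q}(\mathbb{R}_q^2)$, where both limits are immediate. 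So your reservations are well placed but do not affect the use of the lemma in the paper.
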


Using the $q$-Jackson integrals, we note for $p>0 $ and  $\alpha \geq-\frac{1}{2}$,\\
 $\bullet$ $ \ds L_{\alpha ,q}^p(\R_{q,+}^{2})$ the Banach space constituted of   functions  such that
 $$
\|f\|_{L_{\alpha ,q}^p(\R_{q,+}^{2})}=\left(\int_{\R_{q,+}^{2}}|f(x_1,x_2)|^pd\mu_{\alpha,q}(x_1,x_2)\right)^{\frac{1}{p}}<\infty , \text {  if } p <\infty
,$$
where
 \begin{equation*}
  d\mu_{\alpha,q}(x_1,x_2)=x_{2}^{2\alpha+1}d_qx_1d_qx_2.
\end{equation*}
 $$
\|f\|_{L_q^\infty(\R_{q,+}^{2})}=\sup_{x\in\R_{q,+}^{2}}|f(x)|<\infty. $$

\begin{lemme}\label{l23}
	Let $f \in \mathscr{D}_{R,q}(\R_{q}^2) $. Then, for $n_1$, $n_2$, $p_1$, $p_2$ and $ p\in\N$  such that   $ p_1\leq p<n_1 $ and $p_2\leq p<n_2 $, then  the function $ (t_1,t_2)\mapsto D_{q}^{(p_1,p_2)}\Big(t_1^{n_1}t_2^{n_2} f\Big)\in\mathscr{D}_{\frac{R}{q^{p}},q}(\R_{q}^2)$  	
	\begin{equation}
\Big\Arrowvert D_q^{(p_1,p_2)}\Big(t_1^{n_1}t_2^{n_2} f\Big)\Big\Arrowvert_{L_q^\infty(\R_{q,+}^{2})}\leq C(R,p)\Big(\frac{R}{q^{2p}}\Big)^{n_1+n_2}\frac{(q^{n_1};q^{-1})_p(q^{n_2};q^{-1})_p}{(1-q)^{2p}}.
	\end{equation}
\end{lemme}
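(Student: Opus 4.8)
The plan is to treat the two claims in turn. For the membership in $\mathscr{D}_{R/q^p,q}(\R_q^2)$, I would first observe that multiplying by the polynomial $t_1^{n_1}t_2^{n_2}$ leaves both the support and the $q$-smoothness unchanged, so $g:=t_1^{n_1}t_2^{n_2}f$ still belongs to $\mathscr{D}_{R,q}(\R_q^2)$. The decisive point is to control how $\dq$ enlarges the support. Reading off the definition of $\dq$, the value $(\partial_{x,q}h)(x_1,x_2)$ depends only on $h$ at the five points $(\pm q^{\pm1}x_1,x_2)$ and $(-x_1,x_2)$, among which $(qx_1,x_2)$ has the smallest norm; hence if $h$ vanishes outside $\{q^{2k}x_1^2+x_2^2\le R^2\}$ then $\partial_{x,q}h$ vanishes outside $\{q^{2(k+1)}x_1^2+x_2^2\le R^2\}$, and symmetrically for $\partial_{y,q}$. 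Iterating $p_1$ times in $x$ and $p_2$ times in $y$ shows that $D_q^{(p_1,p_2)}g$ is supported in $\{q^{2p_1}x_1^2+q^{2p_2}x_2^2\le R^2\}$, which, since $\max(p_1,p_2)\le p$, is contained in $B_{(0,R/q^p)}$. As $\dq$ preserves $q$-smoothness and the existence of the limits at the origin, this gives $D_q^{(p_1,p_2)}g\in\mathscr{D}_{R/q^p,q}(\R_q^2)$.

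For the estimate, I would begin with the action of $\dq$ on monomials, which a direct computation from the definition yields as $\dq(x^n)=[n]_q x^{n-1}$ for $n$ odd and $\dq(x^n)=q^{-n}[n]_q x^{n-1}$ for $n$ even. Iterating gives $\partial_{x,q}^{k}(x^{n})=c_{n,k}\,x^{n-k}$ with $0<c_{n,k}\le q^{-kn}\prod_{i=0}^{k-1}[n-i]_q=q^{-kn}\,\dfrac{(q^{n};q^{-1})_k}{(1-q)^k}$, every factor $[n-i]_q$ being $\ge1$ because $k\le p<n$. I would then expand $D_q^{(p_1,p_2)}(t_1^{n_1}t_2^{n_2}f)$ by the $q$-Leibniz formulas of Lemma~\ref{ld}, distributing the $p_1$ and $p_2$ derivatives between the monomials and $f$. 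In a generic term, $j_1\le p_1$ derivatives fall on $t_1^{n_1}$ and $j_2\le p_2$ on $t_2^{n_2}$, producing a coefficient bounded by $\dfrac{(q^{n_1};q^{-1})_p(q^{n_2};q^{-1})_p}{(1-q)^{2p}}$ together with $q$-powers and surviving factors $t_1^{n_1-j_1}t_2^{n_2-j_2}$, while the remaining derivatives act on $f$ to give a $q$-derivative of order $\le p$, bounded by the seminorm $P_{p,R/q^p}(f)$. On the support $B_{(0,R/q^p)}$ one has $|t_i|\le R/q^p$, so $|t_i|^{n_i-j_i}q^{-p n_i}\le (R/q^{2p})^{n_i}(q^p/R)^{j_i}$, and the last factor, with $j_i\le p$, is a constant depending only on $R,p,q$. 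Summing the finitely many Leibniz terms and absorbing into $C(R,p)$ all the constants (the number of terms, the accumulated $q$-shift factors, the powers $(q^p/R)^{j_i}$ and the seminorms of $f$) produces exactly the claimed bound.

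The main difficulty lies in the Leibniz bookkeeping: the product rules in Lemma~\ref{ld} are parity-sensitive and insert dilated arguments $qz$ and $q^{-1}z$, so after $p_1+p_2$ iterations one must check that the accumulated dilations and parities contribute only factors controlled by $p$ and $q$, and that the sharpest growth in $n_1,n_2$ is precisely the recorded $q^{-2p\,n_i}$. A secondary point, to be handled explicitly, is the replacement of the true orders $p_1,p_2$ by the common $p$: since $p_i\le p<n_i$ all exponents $n_i-i$ stay positive, the extra factors $[n_i-i]_q\ge1$ only enlarge the product, and the discrepancy $(q^{n_i};q^{-1})_p/(q^{n_i};q^{-1})_{p_i}$ is bounded below by a constant depending only on $p,q$; both are absorbed into $C(R,p)$, which completes the argument.
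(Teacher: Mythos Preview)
Your proposal is correct and follows essentially the same line as the paper's proof: both use the $q$-product rules of Lemma~\ref{ld} to split $D_q^{(p_1,p_2)}(t_1^{n_1}t_2^{n_2}f)$ into a monomial contribution governed by $(q^{n_i};q^{-1})_p/(1-q)^p$ and a remainder built from $q$-derivatives of $f$ of order at most $p$, then bound the surviving powers of $t_i$ on the dilated support $B_{(0,R/q^p)}$. The paper simply packages the Leibniz expansion into a single function $f_{p_1,p_2}$ with the stated support and $L^\infty$ bound, while you keep the expansion explicit and give a slightly more detailed support-tracking argument, but the substance is identical.
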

\begin{proof}
	Using Lemma \ref{ld}, we have
	$$ D_q^{(p_1,p_2)}\Big(t_1^{n_1}t_2^{n_2} f\Big)(t_1,t_2)=t_1^{n_1-p_1}t_2^{n_2-p_2}(-1)^{p_1+p_2}\frac{\left(q^{-n_1} ; q\right)_{p_1}\left(q^{-n_2} ; q\right)_{p_2}}{(1-q)^{p_1+p_2}} f_{p_1,p_2}(t_1,t_2),$$
	where $f_{p_1,p_2}$ is a function satisfying $\operatorname{supp} (f_{p_1,p_2}) \subset B(0,q^{-p}R)$ and
	$$
	\left\|f_{p_1,p_2}\right\|{_{L_q^\infty(\R_{q,+}^{2})}} \leq C_{p_1,p_2} \sum_{k_1=0}^{p_1}\sum_{k_2=0}^{p_2}\left\|D^{(k_1,k_2)} f\right\|_{L_q^\infty(\R_{q,+}^{2})}.
	$$
	So,
	\begin{align*}
	 &\Big\Arrowvert D_q^{(p_1,p_2)}\Big(t_1^{n_1}t_2^{n_2} f\Big)\Big\Arrowvert_{L_q^\infty(\R_{q,+}^{2})}\\&\leq C_{p_1,p_2} \sum_{k_1=0}^{p_1}\sum_{k_2=0}^{p_2}\left\|D_q^{(k_1,k_2)} f\right\|_{L_q^\infty(\R_{q,+}^{2})}(q^{-p}R)^{n_1+n_2-p_1-p_2}  \Big|\frac{\left(q^{-n_1} ; q\right)_{p_1}\left(q^{-n_2} ; q\right)_{p_2}}{(1-q)^{p_1+p_2}} \Big|\\
	 & \leq C_{p_1,p_2} \sum_{k_1=0}^{p_1}\sum_{k_2=0}^{p_2}\left\|D_q^{(k_1,k_2)} f\right\|_{L_q^\infty(\R_{q,+}^{2})}(q^{-p}R)^{n_1+n_2-p_1-p_2}  \Big|\frac{\left(q^{-n_1} ; q\right)_{p}\left(q^{-n_2} ; q\right)_{p}}{(1-q)^{2p}} \Big|\\
	 &= C_{p_1,p_2} \sum_{k_1=0}^{p_1}\sum_{k_2=0}^{p_2}\left\|D_q^{(k_1,k_2)} f\right\|_{L_q^\infty(\R_{q,+}^{2})}(q^{-2p}R)^{n_1+n_2}(q^{-p}R)^{-p_1-p_2}  q^{p(p-1)} \frac{\left(q^{n_1} ; q^{-1}\right)_{p}\left(q^{n_2} ; q^{-1}\right)_{p}}{(1-q)^{2p}} \\
	 & \leq C(R,p)\Big(\frac{R}{q^{2p}}\Big)^{n_1+n_2}\frac{(q^{n_1};q^{-1})
_p(q^{n_2};q^{-1})_p}{(1-q)^{2p}},
	  \end{align*}
	  where
	  $$C(R,p)=\max_{p_1,p_2\leq p} \Big\{C_{p_1,p_2} \sum_{k_1=0}^{p_1}\sum_{k_2=0}^{p_2}\left\|D_q^{(k_1,k_2)} f\right\|_{L_q^\infty(\R_{q,+}^{2})}(q^{-p}R)^{-p_1-p_2}  q^{p(p-1)}\Big\}.$$
	\end{proof}
\begin{coro} \label{RD}	
	Let $f \in \mathscr{D}_{R,q}(\R_{q}^2)  $. Then, for $p,n
	,i$ and $j$$  \in\N$   such that   $ i,j\leq p $,   there exists $C_{p,R}>0$ such that
		\begin{equation}
		\Big\Arrowvert D_q^{(2i,2j)}\Big(\|t\|^{2n} f\Big)\Big\Arrowvert_{L_q^\infty(\R_{q,+}^{2})}\leq C_{p,R}\Big(\frac{R}{q^{4p}}\Big)^{2n}\Big(\frac{(q^{2n};q^{-1})_{2p}}{(1-q)^{2p}}\Big)^{2}.
		\end{equation}
			
\end{coro}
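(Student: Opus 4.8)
The goal is to deduce the multi-index estimate for $D_q^{(2i,2j)}\bigl(\|t\|^{2n}f\bigr)$ from the single-monomial estimate of Lemma \ref{l23}. The starting observation is that $\|t\|^{2n}=(t_1^2+t_2^2)^n$ expands, by the $q$-binomial theorem (or plain binomial theorem, since $t_1^2$ and $t_2^2$ commute as scalars), into a finite sum $\sum_{k=0}^{n}\binom{n}{k}t_1^{2k}t_2^{2(n-k)}$. The plan is to distribute $D_q^{(2i,2j)}$ across this sum by linearity, apply Lemma \ref{l23} to each term $t_1^{2k}t_2^{2(n-k)}f$ with the substitutions $(n_1,n_2,p_1,p_2,p)=(2k,2(n-k),2i,2j,2p)$, and then control the resulting sum by a single dominating term. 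The hypotheses of Lemma \ref{l23} are met because $i,j\le p$ forces $2i\le 2p$ and $2j\le 2p$, and one needs $2k<2k+1$-type strict inequalities $p_1\le 2p<n_1$; the indices $n_1=2k$, $n_2=2(n-k)$ should be large enough relative to $2p$ (this is the regime where the estimate is meaningful, and for small $n$ the left side is a finite explicit quantity absorbed into the constant).

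First I would write $D_q^{(2i,2j)}\bigl(\|t\|^{2n}f\bigr)=\sum_{k=0}^n\binom{n}{k}D_q^{(2i,2j)}\bigl(t_1^{2k}t_2^{2(n-k)}f\bigr)$ and take $L_q^\infty$ norms, using the triangle inequality. Then I would invoke Lemma \ref{l23} on each summand to bound $\bigl\|D_q^{(2i,2j)}(t_1^{2k}t_2^{2(n-k)}f)\bigr\|_{L_q^\infty(\R_{q,+}^2)}$ by
\[
C(R,2p)\Bigl(\tfrac{R}{q^{4p}}\Bigr)^{2n}\frac{(q^{2k};q^{-1})_{2p}\,(q^{2(n-k)};q^{-1})_{2p}}{(1-q)^{4p}},
\]
where the exponent $n_1+n_2=2n$ produces the factor $(R/q^{4p})^{2n}$. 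The key reduction is then to show that each $q$-shifted factorial $(q^{2k};q^{-1})_{2p}$ and $(q^{2(n-k)};q^{-1})_{2p}$ is dominated, uniformly in $k$, by $(q^{2n};q^{-1})_{2p}$; since $0\le k\le n$ gives $q^{2k}\ge q^{2n}$ and these factorials are monotone in their first argument in the relevant range, each factor is at most $(q^{2n};q^{-1})_{2p}$. That yields the square $\bigl((q^{2n};q^{-1})_{2p}\bigr)^2$ in the stated bound.

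The last step is bookkeeping on constants: the binomial coefficients sum to $\sum_k\binom{n}{k}=2^n$, which must be absorbed into the constant $C_{p,R}$. Here I would either enlarge $(R/q^{4p})^{2n}$ slightly to swallow the $2^n$ (replacing $R$ by $2R$, say, or noting $2^n\le (R')^{2n}$ for a suitable adjusted radius), or fold the geometric growth into $C_{p,R}$ together with $C(R,2p)$ and the $(1-q)^{-4p}=\bigl((1-q)^{-2p}\bigr)^2$ denominator that matches the squared form on the right-hand side.

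The main obstacle I anticipate is the uniform monotonicity claim for the $q$-shifted factorials $(q^{2k};q^{-1})_{2p}$ as $k$ ranges over $\{0,\dots,n\}$: one must verify carefully that the bound by $(q^{2n};q^{-1})_{2p}$ holds for every $k$, paying attention to signs of the factors $1-q^{2k}q^{-\ell}$ (which can be negative when $2k<\ell$) and to the absolute-value conventions implicit in Lemma \ref{l23}. Everything else—the binomial expansion, the termwise application of the lemma, and the constant chasing—is routine linear-algebra and estimation that slots directly into the machinery already established.
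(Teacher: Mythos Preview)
Your approach is essentially identical to the paper's: expand $\|t\|^{2n}=(t_1^2+t_2^2)^n$ by the binomial theorem, apply Lemma~\ref{l23} term by term, and then invoke the monotonicity of the sequence $\bigl((q^{m};q^{-1})_{2p}\bigr)_{m>2p}$ to replace each factor $(q^{2k};q^{-1})_{2p}$ and $(q^{2(n-k)};q^{-1})_{2p}$ by the single dominating factor $(q^{2n};q^{-1})_{2p}$. The paper's proof does exactly this, in the same order and with the same key lemma, differing only in index conventions.

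One remark: the concern you raise about the leftover binomial sum $\sum_k\binom{n}{k}=2^n$ is legitimate, and the paper simply does not address it---it stops at the monotonicity step and declares the proof finished. So your bookkeeping instinct is sharper than the paper's here; absorbing that $2^n$ genuinely requires enlarging the base (e.g.\ replacing $R$ by $2R$), which is harmless for the downstream application in Section~5 but is not, strictly speaking, the inequality as stated. Your identification of the sign/monotonicity subtlety in $(q^{2k};q^{-1})_{2p}$ for small $k$ is also a point the paper sweeps under the rug with the phrase ``$n>p$''.
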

\begin{proof}
		Thanks to Lemma \ref{l23}, we have
		\begin{equation}	\begin{split}
		\Big\Arrowvert D_q^{(2i,2j)}\Big(\|t\|^{2n} f\Big)\Big\Arrowvert_{L_q^\infty(\R_{q,+}^{2})}
		&\leq\sum_{n_1+n_2=2n}\binom{2n}{n_1}\Big\Arrowvert D_q^{(2i,2j)}\Big(t_1^{2n_1}t_2^{2n_2}  f\Big)\Big\Arrowvert_{L_q^\infty(\R_{q,+}^{2})}\\
		&\leq C(R,p)\Big(\frac{R}{q^{4p}}\Big)^{2n}\sum_{n_1+n_2=2n}\binom{2n}{n_1}
		\frac{(q^{2n_1};q^{-1})_{2p}(q^{2n_2};q^{-1})_{2p}}{(1-q)^{4p}}.
		\end{split}\end{equation}
		Hence,   the fact  that $((q^{n};q^{-1})_p)_{n>p}$ is  an increasing  sequence  achieves the proof.
	\end{proof}
\section{The $q$-Weinstein transform}
In \cite{youss}, a $q$-analogue of the Weinstein operator and its associated Fourier transform are introduced and studied.  In this section, we collect some of their basic properties.

\noindent$\bullet$ The $q$-Weinstein operator is given by
\begin{equation*}
  \triangle_{\alpha,q}=\partial_{q,x}^{2}+\frac{1}{ |y|^{2\alpha+1}}\partial_{q,y}( |y|^{2\alpha+1}\partial_{q,y})=\partial_{q,x}^{2}+\mathscr{B}_{{\alpha },q},\quad \alpha\geq-\frac{1}{2},
\end{equation*} where $\mathscr{B}_{{\alpha },q}$ is the $q$-Bessel operator defined in \cite{pwb}.\\

  \noindent$\bullet$ For all $f$, $g\in \mathscr{S}_{\ast,q} ({\R_q^{2}})$, we have
  \begin{equation}\label{labla}
   \int_{0}^{+\infty}\int_{-\infty}^{+\infty}\triangle_{{\alpha },q}f(x,y)g(x,y) y^{2\alpha+1}
 d_{q}x d_{q}y=\int_{0}^{+\infty}\int_{-\infty}^{+\infty}\triangle_{{\alpha },q}g(x,y)f(x,y) y^{2\alpha+1}
 d_{q}x d_{q}y.
  \end{equation}
  That is $\triangle_{{\alpha },q}$
  is self-adjoint.\\
 \noindent $\bullet$ For all  $\lambda=(\lambda_{1},\lambda_{1}) \in \mathbb{C}^2,$ the function
  \begin{equation}\label{ej}
\Lambda^{\alpha}_{q,\lambda}(x)=\Lambda^{\alpha}_{q}(\lambda_{1}x_1,\lambda_{2}x_2)=e(-i\lambda_{1}x_1;q^{2})j_{\alpha}(\lambda_{2}x_2;q^{2})
\end{equation}
  is the unique solution of the $q$-differential-difference equation:
  \begin{equation*}
\left\{
\begin{array}{lll}
\mathscr{B}_{{\alpha },q}u(x_1,x_2) & = & -\lambda_{2} ^{2}u(x_1,x_2), \\
&\\
\partial_{q,x_1}^{2}u(x_1,x_2)& = &-\lambda_{1} ^{2}u(x_1,x_2),\\
&\\
u(0,0) =1, & &\partial_{q,x_2}u\left( 0,0\right)   =  0,\qquad  \partial_{q,x_1}u(0,0)  =  -i\lambda_{1}.

\end{array}%
\right.
\end{equation*}
The function $\Lambda^{\alpha}_{q,\lambda}$ satisfies the
following properties  (see\cite{youss} and \cite{youss 1})
 \begin{enumerate}
  \item Let $\lambda=(\lambda_1,\lambda_2) \in \mathbb{C}^2$. For all $n ,p\in \mathbb{N}, $ we have
 \begin{equation}\label{06}
   \partial_{q}^n\left[\mathscr{B}_{{\alpha },q}^p(\Lambda^{\alpha}_{q,\lambda})\right]=\mathscr{B}_{{\alpha },q}^p\left[\partial_{q}^n(\Lambda^{\alpha}_{q,\lambda})\right]=(-i\lambda_1)^n(-i\lambda_2)^{2p}\Lambda^{\alpha}_{q,\lambda}.
 \end{equation}
 \item For $n \in \mathbb{N}$ and $ \lambda \in \mathbb{R}^2$,we have
 \begin{equation}\label{di}
  \forall x \in \R_{q}^{2} , \quad \triangle_{{\alpha },q}^n(\Lambda^{\alpha}_{q,\lambda})(x)=(-1)^n\parallel\lambda\parallel^{2n}\Lambda^{\alpha}_{q,\lambda}(x).
 \end{equation}
  \item For $\lambda=(\lambda_1,\lambda_2) \in \mathbb{C}^2$ and $\beta=(\beta_1,\beta_2)\in \mathbb{N}^2,$ we have
 \begin{equation}\label{6}
 \forall x \in \R_{q}^{2} , \quad\left|D_q^{\beta}\left(\Lambda^{\alpha}_{q,\lambda}\right)(x)\right|\leq\frac{4|\lambda_1|^{\beta_1}|\lambda_2|^{\beta_2}}{(q;q)_\infty^2} .
 \end{equation}
 In particular,
  \begin{equation}\label{7}\forall x \in \R_{q}^{2} , \quad \left|\Lambda^{\alpha}_{q,\lambda}(x)\right|\leq\frac{4}{(q;q)_\infty^2}.
 \end{equation}
  \item For all $\lambda \in \R_{q}^{2},$ we have $\Lambda^{\alpha}_{q,\lambda} \in  \mathscr{S}_{\ast,q} ({\R_q^{2}})$.\\
  \item For all $x$, $y\in \mathbb{R}_{q,+}^{2} $, we have
\begin{equation}\label{pro}
 \int_{\R_{q,+}^{2}}\Lambda^{\alpha}_{q,\lambda}(x){\Lambda^{\alpha}_{q,-\lambda}(y)}d_{q}\mu_{\alpha}(\lambda)
  =\left[2(1+q)^{\alpha-{\frac{1}{2}}}\Gamma_{q^{2}}(\frac{1}{2})\Gamma_{q^{2}}(\alpha+1) \right]^{2}\delta^{\alpha}_{x}(y),
\end{equation}
withe  $\delta^{\alpha}_{x},~\alpha\geq-\frac{1}{2}$, denotes the weighted Dirac-measure at $x \in \mathbb{R}_{q,+}^2 $ defined by
\begin{equation*}
 \forall  y \in\mathbb{R}_{q,+}^{2}, \quad \delta^{\alpha}_{x}(y)=\left\{
\begin{array}{ccc}
\left[ (1-q)^{2}|x_{1}|x_{2}^{2\alpha+2}\right]^{-1}, &if& x=y, \\
0, & &ifnot.
\end{array}
\right.
\end{equation*}
  \item The function $(\lambda,z)\mapsto \Lambda^{\alpha}_{q,\lambda}(z)$ has a unique extension to $\mathbb{C}^2 \times \mathbb{C}^2$ and we have
 \begin{equation*}
 \forall z,\lambda\in \mathbb{C}^2,  \quad \Lambda^{\alpha}_{q,\lambda}(z)= \sum_{n,m=0}^{\infty} v_{n,m}(-i\lambda_1z_1)^{n}(i\lambda_2z_2)^{2m},
 \end{equation*}
 where $$\forall n,m \in \N\quad v_{n,m}=a_n.b_m . $$\\
  \item For all $x \in \mathbb{R}_{q,+}^{2} \cap[-a, a]^{2}$, we have
$$
\forall z \in \mathbb{C}^2, \quad \left|\Lambda_{q, z}^{\alpha}(x)\right| \leq 4 e^{{2}a({1+\sqrt{q}})\|z\|}.
$$ \end{enumerate}
 \begin{lemme}\label{ldd}
 		Let $f \in \mathscr{D}_{*,R,q}(\R_{q}^2)  $ for $k \in\N$, we have
 		\begin{equation}
 			\Big\Arrowvert \triangle_{{\alpha },q}^k f\Big\Arrowvert_{L_q^\infty(\R_{q,+}^{2})}\leq C_{k}\max_{p_1,p_2\leq k}	\Big\Arrowvert D_q^{2(p_1,p_2)} f\Big\Arrowvert_{L_q^\infty(\R_{q,+}^{2})}
 		\end{equation}
 \end{lemme}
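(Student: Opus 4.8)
The plan is to exploit the fact that in $\triangle_{\alpha,q}=\partial_{q,x}^{2}+\mathscr{B}_{\alpha,q}$ the two summands act on different variables and therefore commute. First I would expand by the binomial theorem,
\[
\triangle_{\alpha,q}^{k}=\sum_{j=0}^{k}\binom{k}{j}\partial_{q,x}^{2j}\,\mathscr{B}_{\alpha,q}^{k-j},
\]
and apply the triangle inequality, so that everything reduces to estimating each $\big\|\partial_{q,x}^{2j}\mathscr{B}_{\alpha,q}^{k-j}f\big\|_{L_q^\infty(\R_{q,+}^{2})}$. Since $\partial_{q,x}$ does not touch the variable $y$, it commutes past $\mathscr{B}_{\alpha,q}$, and for each fixed $x$ the function $y\mapsto\partial_{q,x}^{2j}f(x,y)$ is even in $y$ (because $f\in\mathscr{D}_{*,R,q}(\R_q^2)$). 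Thus the whole problem collapses to the one--dimensional statement: for $h$ even and $m\in\N$, $\big\|\mathscr{B}_{\alpha,q}^{m}h\big\|_{\infty}\le C_{m}\max_{i\le m}\big\|\partial_{q,y}^{2i}h\big\|_{\infty}$, after which reinstating $\partial_{q,x}^{2j}$ turns the right--hand side into the mixed norms $\|D_q^{2(j,i)}f\|$ required by the statement.

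The base case $m=1$ is the heart of the matter. Writing $w(y)=|y|^{2\alpha+1}$ (even) and using that $\partial_{q,y}h$ is odd when $h$ is even, part 3(i) of Lemma \ref{ld} gives, after simplifying the ratio $\partial_q(w)(qy)/w(y)$,
\[
\mathscr{B}_{\alpha,q}h(y)=\partial_{q,y}^{2}h(y)+[2\alpha+1]_q\,\frac{\partial_{q,y}h(qy)}{y}.
\]
The only dangerous term is the second one, because of the factor $1/y$. Here I would use that $h$ even forces $\partial_{q,y}h(0)=0$, so the $q$-fundamental theorem of calculus yields $\partial_{q,y}h(qy)=\int_0^{qy}\partial_{q,y}^{2}h(t)\,d_qt$; estimating this Jackson integral by $\|\partial_{q,y}^{2}h\|_\infty$ and using $(1-q)\sum_{n\ge0}q^n=1$ gives the clean, support--independent bound $\big|\partial_{q,y}h(qy)/y\big|\le q\,\|\partial_{q,y}^{2}h\|_\infty$. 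Hence $\|\mathscr{B}_{\alpha,q}h\|_\infty\le\big(1+q[2\alpha+1]_q\big)\|\partial_{q,y}^{2}h\|_\infty$, which is the case $m=1$.

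For general $m$ I would prove, by induction on $m$, the slightly stronger claim that for all $j\in\N$ and all even $G$,
\[
\big\|\partial_{q,y}^{2j}\mathscr{B}_{\alpha,q}^{m}G\big\|_\infty\le C_{j,m}\max_{i\le j+m}\big\|\partial_{q,y}^{2i}G\big\|_\infty .
\]
Setting $(SG)(y):=\dfrac{\partial_{q,y}G(qy)}{y}$, the identity above reads $\mathscr{B}_{\alpha,q}=\partial_{q,y}^{2}+[2\alpha+1]_qS$, so $\partial_{q,y}^{2j}\mathscr{B}_{\alpha,q}^{m}=\partial_{q,y}^{2j+2}\mathscr{B}_{\alpha,q}^{m-1}+[2\alpha+1]_q\,\partial_{q,y}^{2j}S\,\mathscr{B}_{\alpha,q}^{m-1}$. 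The first summand is handled by the induction hypothesis (with $j+1$ in place of $j$), and the second by the induction hypothesis together with an auxiliary estimate on the singular operator $S$, namely $\|\partial_{q,y}^{2j}S G\|_\infty\le\widetilde C_{j}\max_{i\le j+1}\|\partial_{q,y}^{2i}G\|_\infty$ for even $G$; for $j=0$ this is exactly the base estimate above. Collecting the binomial factors and the constants $C_{j,m}$ then produces the desired $C_k$.

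The main obstacle is precisely this last auxiliary estimate for $j\ge1$: one must control $\partial_{q,y}^{2j}\big(\tfrac1y\partial_{q,y}G(qy)\big)$ uniformly in $y\in\R_q$ and independently of $\operatorname{supp}G$. The difficulty is concentrated at $y=0$, where the factor $1/y$ meets the $2j$ extra $q$-derivatives; the saving grace, to be used repeatedly, is that evenness of $G$ makes all odd-order $q$-derivatives vanish at the origin, so each division by $y$ can be traded (via the $q$-integral representation used in the base case) for one more even-order derivative, keeping every quantity bounded by $\max_{i\le j+1}\|\partial_{q,y}^{2i}G\|_\infty$ with constants depending only on $\alpha$, $q$ and $j$.
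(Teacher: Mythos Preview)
Your argument is correct and rests on the same core device as the paper's proof: replacing the singular term $\dfrac{c}{y}\,\partial_{q,y}h$ by a Jackson integral of $\partial_{q,y}^{2}h$ (the paper writes $\dfrac{1}{y}\partial_{q,y}f(x,y)=\int_0^1\partial_{q,y}^{2}f(x,yt)\,d_qt$, which is exactly your $q$-fundamental-theorem step), so that every application of $\triangle_{\alpha,q}$ produces only pure even-order mixed $q$-derivatives, possibly averaged against bounded weights.

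The organization is genuinely different, however. The paper keeps the three-term form
\[
\triangle_{\alpha,q}f=\partial_{x,q}^{2}f+q^{2\alpha+1}\partial_{y,q}^{2}f-q[-2\alpha-1]_q\int_0^1\partial_{y,q}^{2}f(x,yt)\,d_qt
\]
and iterates it \emph{globally}, asserting a closed expression for $\triangle_{\alpha,q}^{k}f$ as a binomial sum of $D_q^{2(k-m,k)}f$ plus iterated integrals of the same derivatives against polynomial kernels $P_{2k-1},Q_{2k-1}$; the estimate is then read off directly. You instead first separate the variables via $\triangle_{\alpha,q}^{k}=\sum_j\binom{k}{j}\partial_{q,x}^{2j}\mathscr{B}_{\alpha,q}^{k-j}$ and reduce everything to a one-dimensional induction on powers of $\mathscr{B}_{\alpha,q}$. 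Your route is slightly longer to set up but cleaner and more transparent: the ``main obstacle'' you flag, bounding $\partial_{q,y}^{2j}SG$, is in fact immediate once one differentiates under the Jackson integral using the dilation rule $\partial_q[g(cy)]=c\,(\partial_q g)(cy)$, giving $\partial_{q,y}^{2j}(SG)(y)=q\int_0^1(qs)^{2j}(\partial_q^{2j+2}G)(qys)\,d_qs$ and hence $\|\partial_{q,y}^{2j}SG\|_\infty\le \dfrac{q^{2j+1}}{[2j+1]_q}\|\partial_q^{2j+2}G\|_\infty$. What the paper's direct iteration buys is brevity; what your decomposition buys is a fully explicit inductive scheme with no combinatorial formula left unproved.
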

\begin{proof}
	Let $f \in \mathscr{D}_{*,R,q}(\R_{q}^2)$, from the Lemma \ref{ld}, we have
\begin{align*}
(\triangle_{\alpha,q}f)(x,y)&=\partial_{x,q}^2f(x,y)+q^{2\alpha+1}\partial_{y,q}^2f(x,y)-\frac{q[-2\alpha-1]_{q}}{y}\partial_{y,q}f(x,y)\\
&=\partial_{x,q}^2f(x,y)+q^{2\alpha+1}\partial_{y,q}^2f(x,y)-{q[-2\alpha-1]_{q}}\int_{0}^{1}\partial_{y,q}^{2}f(x,yt)d_qt.
\end{align*}
So, for $ k \in\N$, $k\geq 1$,  $\triangle_{\alpha,q}^{k}f \in \mathscr{D}_{*,R,q}(\R_{q}^2)$ and we have
 \begin{align*}
(\triangle_{\alpha,q}^k f)(x,y)&=\sum_{m=0}^{k}\left(\begin{array}{l}{k} \\ {m}\end{array}\right) \Big(D_{q}^{2(k-m,k)} f\Big)(x,y) +\sum_{j=1}^{2k-1} \int_{0}^{1} \cdots \int_{0}^{1} P_{2k-1}\left(t_{1}, \ldots, t_{j}\right)\\
&\times \Big(D_{q}^{2(k-m,k)} f\Big)\left(x,\mathrm{y.t}_{1} \cdots \mathrm{t}_{\mathrm{j}}\right) \mathrm{dt}_{1} \cdots \mathrm{dt}_{\mathrm{j}}+\int_{0}^{1} \cdots \int_{0}^{1} \mathrm{Q}_{2\mathrm{k}-1}\left(\mathrm{t}_{1}, \ldots, \mathrm{t}_{\mathrm{k}-1}\right)\\
&\times \Big(D_{q}^{2(k-m,k)} f\Big)\left({x,y.t}_{1} \cdots t_{\mathrm{k}}\right) t_{1} \cdots t_{\mathrm{k}},
 \end{align*}
 where $\mathrm{P}_{\mathrm{2k}-1}\left(t_{1}, \ldots, t_{\mathrm{j}}\right), \mathrm{j}=1,2, \ldots, \mathrm{2k}-1,$ and $\mathrm{Q}_{\mathrm{2k}-1}\left(t_{1}, \ldots, t_{\mathrm{2k}-1}\right)$ are polynomials of
 degree at most $\mathrm{2k}-1$ with respect to each variable.\\
Thus there exists a positive constant $C_k$ such that
\begin{equation}
 \Big|\triangle_{{\alpha },q}^k f(x,y)\Big|\leq C_{k}\max_{p_1,p_2\leq k}	\Big\Arrowvert D_q^{2(p_1,p_2)}\Big( f\Big)\Big\Arrowvert_{L_q^\infty(\R_{q,+}^{2})}
\end{equation}
\end{proof}
 The $q$-Weinstein transform $\mathscr{F}^{\alpha ,q}_W$ is defined on $L_{\alpha, q}^{1}\left(\mathbb{R}_{q,+}^{2}\right)$ by :
\begin{equation}\label{FB}
\mathscr{F}^{\alpha ,q}_W(f)(\lambda) = K_{\alpha ,q}
\ds\int_{\R_{q,+}^{2}} f(x,y)\Lambda^{\alpha}_{q,\lambda}(x,y)y^{2\alpha +1}d_qxd_qy
\end{equation} $where$  \begin{equation}\label{c} K_{\alpha ,q} =\frac{(1+q)^{\frac{1}{2}-\alpha}}{2\Gamma_{q^2}\left( \frac{1}{2}\right)\Gamma_{q^2}\left( \alpha+1\right)}.
\end{equation}
The $q$-Weinstein transformation satisfies the following properties
 \begin{enumerate}
   \item   For all $f \in L_{\alpha ,q}^1 (\R_{q,+}^{2})$, $ \F_W^{\alpha ,q}(f)\in L_{q}^\infty(\R_{q,+}^{2})$, and we have
  \begin{equation}\label{3.12}
\|\F_W^{\alpha ,q}(f)\|_{L_{q}^\infty(\R_{q,+}^{2})} \leq \frac{4K_{\alpha
,q}}{(q;q)^2_\infty} \| f \|_{L_{\alpha ,q}^1 (\R_{q,+}^{2})}\end{equation} and
\begin{equation*}
\lim_{\|\lambda\|\rightarrow \infty}\F_W^{\alpha ,q}(f)(\lambda)=0.
\end{equation*}
 \item Let $f \in \mathscr{S}_{\ast,q} ({\R_q^{2}})$. According to relations (\ref{06}), \eqref{di} and integration by parts, we have
 \begin{equation}\label{10}
 \F_W^{\alpha ,q}( \partial_{q}^n \mathscr{B}_{{\alpha },q}^p(f))(\lambda )=(i\lambda_1)^{n}(i\lambda_2)^{2p}
\F_W^{\alpha ,q}(f)(\lambda ),
\end{equation}

 \begin{equation}\label{9}
 \F_W^{\alpha ,q}(x_1^{n}x_2^{2p}f)(\lambda )= i^{n+2p}\partial_{q}^n \mathscr{B}_{{\alpha },q}^p\left(
\F_W^{\alpha ,q}(f)\right)(\lambda ),
\end{equation}

 \begin{equation}\label{FDlab}
 \F_W^{\alpha ,q}(\triangle_{ \alpha ,q}f)(\lambda )= -\parallel\lambda\parallel^2
\F_W^{\alpha ,q}(f)(\lambda ),\end{equation}

 \begin{equation*}
 \F_W^{\alpha ,q}(\parallel .\parallel^{2} f)(\lambda )= -\triangle_{{\alpha },q}\left(
\F_W^{\alpha ,q}(f)\right)(\lambda ).\end{equation*}
 \item For $f,g \in
L_{\alpha ,q}^1 (\R_{q,+}^{2}),$ we have
 \begin{equation}\label{symD}
 \int_{\R_{q,+}^{2}}\F_W^{\alpha ,q}(f)(\lambda)g(\lambda
)d\mu_{\alpha,q}(\lambda) =
\ds\int_{\R_{q,+}^{2}}f(\lambda)\F_W^{\alpha ,q}(g)(\lambda)\mu_{\alpha,q}(\lambda).
\end{equation}
 \end{enumerate}
 (See\cite{youss}).\\
\begin{theorem}~\\
 i) \underline{Plancherel formula} \\
For $\ds \alpha\geq -1/2$, the $q$-Weinstein transform $\F_W^{\alpha,
q}$ is an isomorphism from $\mathscr{S}_{\ast,q} ({\R_q^{2}})$ onto itself.
Moreover, for all $f\in\mathscr{S}_{\ast,q} ({\R_q^{2}})$, we have
\begin{equation}\label{pldun}
\|\F_W^{\alpha,q}(f)\|_{L_{\alpha ,q}^2 (\R_{q,+}^{2})}=\|f\|_{L_{\alpha ,q}^2 (\R_{q,+}^{2})}.
\end{equation}
ii) \underline{Plancheral theorem} \\
The $q$-Weinstein transform can be uniquely extended  to an isometric
isomorphism on $L_{\alpha,q}^2(\R_{q,+}^{2})$. ~~Its  inverse transform
${(\F_W^{\alpha ,q})}^{-1}$ is given by:
\begin{equation}\label{11}
    {(\F_W^{\alpha ,q})}^{-1}(f)(x) =  {K_{\alpha ,q}}\ds\int_{\R_{q,+}^{2}}
    f(\lambda) \Lambda _{q,\lambda} ^{\alpha }(-x)d\mu_{\alpha,q}(\lambda).
\end{equation}
\end{theorem}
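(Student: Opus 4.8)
The plan is to prove everything first on the test space $\mathscr{S}_{\ast,q}(\R_q^2)$ and then pass to $L_{\alpha,q}^2(\R_{q,+}^2)$ by density. For part i) I would begin by checking that $\F_W^{\alpha,q}$ maps $\mathscr{S}_{\ast,q}(\R_q^2)$ continuously into itself. Evenness in the last variable is immediate, since the kernel $\Lambda^{\alpha}_{q,\lambda}(x)=e(-i\lambda_1x_1;q^2)\,j_\alpha(\lambda_2x_2;q^2)$ depends on $\lambda_2$ only through the even function $j_\alpha$. To bound the defining seminorms $P_{n,q}$, I would combine the operational identities: iterating (\ref{FDlab}) gives $\|\lambda\|^{2n}\F_W^{\alpha,q}(f)=(-1)^n\F_W^{\alpha,q}(\triangle_{\alpha,q}^n f)$, while (\ref{9}) converts each $q$-derivative $D_q^\beta$ falling on $\F_W^{\alpha,q}(f)$ into the transform of $f$ multiplied by a monomial. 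Hence every quantity $D_q^\beta[\|\lambda\|^{2n}\F_W^{\alpha,q}(f)]$ is, up to constants, the $q$-Weinstein transform of a function obtained from $f$ by applying powers of $\triangle_{\alpha,q}$ and multiplication by monomials; such a function again lies in $\mathscr{S}_{\ast,q}(\R_q^2)\subset L_{\alpha,q}^1(\R_{q,+}^2)$, so the uniform estimate (\ref{3.12}) renders the seminorm finite. Existence of the limits at the origin follows from the same representation together with the kernel decay (\ref{6}), which justifies differentiating under the $q$-integral.

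For the inversion, the analytic heart of part i), I would insert the definition (\ref{FB}) of $\F_W^{\alpha,q}(f)$ into the candidate inverse (\ref{11}) and use the reflection identity $\Lambda^{\alpha}_{q,\lambda}(-x)=\Lambda^{\alpha}_{q,-\lambda}(x)$, a consequence of the evenness of $j_\alpha$ and of $-x=(-x_1,x_2)$. This produces a double $q$-integral whose inner summation, after interchanging the order of summation (legitimate by absolute convergence, since $f\in\mathscr{S}_{\ast,q}$ decays rapidly and $|\Lambda^{\alpha}_{q,\lambda}|\le 4(q;q)_\infty^{-2}$ by (\ref{7})), equals $\int_{\R_{q,+}^2}\Lambda^{\alpha}_{q,\lambda}(y)\Lambda^{\alpha}_{q,-\lambda}(x)\,d\mu_{\alpha}(\lambda)$. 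The reproducing relation (\ref{pro}) identifies this with $K_{\alpha,q}^{-2}\,\delta^{\alpha}_{y}(x)$, because the bracketed constant there is precisely $K_{\alpha,q}^{-1}$. Since $\delta^{\alpha}_{y}$ is normalized so that $\int_{\R_{q,+}^2}f(y)\,\delta^{\alpha}_{y}(x)\,d\mu_{\alpha}(y)=f(x)$, the prefactor $K_{\alpha,q}^{2}$ cancels $K_{\alpha,q}^{-2}$ and the whole expression collapses to $f(x)$. The reverse composition is symmetric, so $\F_W^{\alpha,q}$ is a bijection of $\mathscr{S}_{\ast,q}(\R_q^2)$ with inverse (\ref{11}).

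To obtain (\ref{pldun}) I would use the adjoint relation (\ref{symD}) together with the inversion just established. For real arguments the kernel satisfies $\overline{\Lambda^{\alpha}_{q,\lambda}(x)}=\Lambda^{\alpha}_{q,-\lambda}(x)$, so writing $g=(\F_W^{\alpha,q})^{-1}(\F_W^{\alpha,q}g)$ via (\ref{11}) and conjugating yields the key identity $\overline{g}=\F_W^{\alpha,q}\big(\overline{\F_W^{\alpha,q}g}\big)$ for every $g\in\mathscr{S}_{\ast,q}(\R_q^2)$. Applying (\ref{symD}) to the pair $f$ and $\overline{\F_W^{\alpha,q}g}$ (both in $L_{\alpha,q}^1$ by Step~1) then gives $\int \F_W^{\alpha,q}(f)\,\overline{\F_W^{\alpha,q}(g)}\,d\mu_{\alpha,q}=\int f\,\overline{g}\,d\mu_{\alpha,q}$, the full Parseval formula; setting $g=f$ produces the Plancherel identity (\ref{pldun}).

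Finally, for part ii) I would invoke the density of $\mathscr{S}_{\ast,q}(\R_q^2)$ in $L_{\alpha,q}^2(\R_{q,+}^2)$ and the bounded-extension principle: an isometry on a dense subspace extends uniquely to an isometry of the whole Hilbert space, and since the restriction is already surjective with isometric inverse (\ref{11}), the extension is an isometric isomorphism, the inversion formula persisting by continuity. The step I expect to be the main obstacle is the stability of the Schwartz space under $\F_W^{\alpha,q}$ in part i): it demands simultaneous control of every mixed seminorm $D_q^\beta[\|\lambda\|^{2n}(\cdot)]$ through the operational calculus, the verification of the $q$-summation interchange, and the limit conditions at the origin, all in the $q$-discrete setting where the usual dominated-convergence arguments must be re-derived from the explicit Jackson sums.
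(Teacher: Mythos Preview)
The paper does not actually prove this theorem. Section~3 is explicitly a summary of results from the authors' earlier article \cite{youss} (``In \cite{youss}, a $q$-analogue of the Weinstein operator and its associated Fourier transform are introduced and studied. In this section, we collect some of their basic properties''), and the Plancherel theorem is simply stated there without argument; it is one of the quoted facts from \cite{youss}. So there is no proof in the present paper to compare against.

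That said, your outline is the standard route one expects \cite{youss} to follow: Schwartz-space stability via the operational rules (\ref{9})--(\ref{FDlab}), inversion from the orthogonality relation (\ref{pro}) (and your identification of the constant there as $K_{\alpha,q}^{-2}$ is correct), Parseval from the symmetry relation (\ref{symD}) combined with the conjugation identity $\overline{\Lambda^{\alpha}_{q,\lambda}(x)}=\Lambda^{\alpha}_{q,-\lambda}(x)$, and finally extension by density. One point deserves more care than you give it: in the inversion step you justify the interchange of the two $q$-integrals by ``absolute convergence, since $f\in\mathscr{S}_{\ast,q}$ decays rapidly and $|\Lambda^{\alpha}_{q,\lambda}|\le 4(q;q)_\infty^{-2}$''. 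But rapid decay of $f$ in the $y$-variable and mere boundedness of the kernel in $\lambda$ do \emph{not} make the double Jackson sum over $(y,\lambda)\in\R_{q,+}^2\times\R_{q,+}^2$ absolutely convergent --- the $\lambda$-sum of a bounded function over the unbounded grid $\R_{q,+}^2$ diverges. The usual remedies are either to first establish that $\F_W^{\alpha,q}(f)\in\mathscr{S}_{\ast,q}\subset L^1_{\alpha,q}$ (your Step~1) and then argue via a truncation/limit of the $\lambda$-integral, or to exploit the fact that (\ref{pro}) is an exact discrete identity and handle the conditional convergence directly. You flag the summation interchange as a potential obstacle at the end, but it belongs to the inversion step rather than to Schwartz-space stability, and the justification you wrote there is not sufficient as stated.
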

\section{REAL PALEY-WIENER THEOREM FOR $ \ds L_{\alpha ,q}^2(\R_{q,+}^{2})$-FUNCTIONS}
For $a \in \mathbb{R}_{q,+},$ we introduce the Paley-Wiener space $P W_{q, \alpha, a}$ as
\begin{equation}
P W_{q, \alpha, a}=\left\{f \in \mathscr{E}_{\ast ,q}(\R_q^{2}): \forall n \in \mathbb{N},\triangle_{ \alpha ,q}^{n} f \in L_{\alpha ,q}^p(\R_{q,+}^{2})\text { and } \lim _{n \rightarrow+\infty}\left\|\triangle_{ \alpha ,q}^{n} f\right\|^{\frac{1}{2n}}_{L_{\alpha ,q}^2(\R_{q,+}^{2})} \leq a\right\}
\end{equation}

\text { The main result in this section will need the following lemma. }
\begin{lemme}\label{2}
Let $F$ be a function defined on $\mathbb{R}_{q,+}^{2},$ such that $ x\longmapsto\lVert x\lVert^{2n} F(x) \in  L_{\alpha ,q}^2(\R_{q,+}^{2})$ for all $n \in \mathbb{N}.$ Then
\begin{equation}
\lim _{n \rightarrow+\infty} \Big\lVert\lVert x\lVert^{2n} F\Big\lVert_{L_{\alpha ,q}^2(\R_{q,+}^{2})}^{\frac{1}{2n}}=\sup \Big\{\|x\|^2,\quad x \in \text {supp} \operatorname{(F)}\cap \mathbb{R}_{q,+}^{2}\Big\}
\end{equation}
\end{lemme}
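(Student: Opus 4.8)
The plan is to reduce the assertion to the classical principle that the $L^{m}$-mean of a nonnegative function against a \emph{finite} measure converges, as $m\to\infty$, to its essential supremum, and then to exploit the fact that $\mu_{\alpha,q}$ is purely atomic on the $q$-lattice. Put $d\nu=|F|^{2}\,d\mu_{\alpha,q}$ on $\R_{q,+}^{2}$; taking $n=0$ in the hypothesis shows $\int_{\R_{q,+}^{2}}|F|^{2}\,d\mu_{\alpha,q}<\infty$, so $\nu$ is a finite positive measure. Writing $h(x)=\|x\|^{2}$, one has for every $n$
\[
\left\|\,\|x\|^{2n}F\,\right\|_{L_{\alpha,q}^{2}(\R_{q,+}^{2})}^{2}=\int_{\R_{q,+}^{2}}h(x)^{2n}\,d\nu(x),
\]
so everything is governed by the quantity $I_{m}=\int_{\R_{q,+}^{2}}h^{m}\,d\nu$, and it suffices to establish $\lim_{m\to\infty}I_{m}^{1/m}=S$, where $S=\sup\{\,\|x\|^{2}:x\in\operatorname{supp}(F)\cap\R_{q,+}^{2}\,\}$.

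The core is a two-sided estimate. For the \emph{upper bound}, assume first $S<\infty$: since $h\le S$ on $\operatorname{supp}(F)$, we get $I_{m}\le S^{m}\,\nu(\R_{q,+}^{2})$, whence $I_{m}^{1/m}\le S\,\nu(\R_{q,+}^{2})^{1/m}\to S$ and $\limsup_{m}I_{m}^{1/m}\le S$. For the \emph{lower bound}, fix $0<\eta<S$ and set $E=\{x\in\R_{q,+}^{2}:h(x)>S-\eta\}$. By the definition of $S$ there is a point $x_{0}=(x_{0,1},x_{0,2})\in\operatorname{supp}(F)\cap E$; since the atomic mass of $\mu_{\alpha,q}$ at $x_{0}$ equals $(1-q)^{2}|x_{0,1}|\,x_{0,2}^{2\alpha+2}>0$ (the normalisation already appearing in \eqref{pro}) and $F(x_{0})\ne0$, we have $\nu(E)>0$. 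Therefore $I_{m}\ge(S-\eta)^{m}\nu(E)$, so $I_{m}^{1/m}\ge(S-\eta)\,\nu(E)^{1/m}\to S-\eta$, and letting $\eta\downarrow0$ gives $\liminf_{m}I_{m}^{1/m}\ge S$. Combining the two bounds yields $\lim_{m}I_{m}^{1/m}=S$.

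What makes the argument clean in the $q$-setting, and replaces the usual measure-theoretic step, is precisely that $\mu_{\alpha,q}$ is atomic with \emph{strictly positive} mass at every point of $\R_{q,+}^{2}$: hence the $\nu$-essential supremum of $h$ coincides with the genuine supremum over $\operatorname{supp}(F)$, and no null-set considerations arise. The point that requires separate treatment — and the main obstacle — is the unbounded case $S=+\infty$: the upper bound is then vacuous, and instead one observes that for each $M>0$ the set $\{h>M\}$ meets $\operatorname{supp}(F)$, hence carries positive $\nu$-mass, so the lower-bound computation gives $\liminf_{m}I_{m}^{1/m}\ge M$ for every $M$ and thus the limit is $+\infty$. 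Finally, the hypothesis $\|x\|^{2n}F\in L_{\alpha,q}^{2}(\R_{q,+}^{2})$ for all $n$ is exactly what guarantees that each $I_{2n}$ is finite, so the whole sequence is well defined; combining this with the displayed identity $\left\|\,\|x\|^{2n}F\,\right\|_{L_{\alpha,q}^{2}}^{1/(2n)}=I_{2n}^{1/(4n)}$ and substituting $m=2n$ into $\lim_{m}I_{m}^{1/m}=S$ then delivers the value of the limit, completing the proof.
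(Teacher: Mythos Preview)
Your approach is essentially the paper's: both introduce the finite measure $|F|^{2}\,d\mu_{\alpha,q}$ (the paper normalises it to a probability measure) and invoke the principle that $L^{m}$-norms against a finite measure climb to the $L^{\infty}$-norm. Your version is more detailed where the paper is terse: you spell out the two-sided estimate, use the atomic structure of $\mu_{\alpha,q}$ to avoid essential-supremum subtleties, and treat the case $S=+\infty$ separately, none of which the paper does explicitly.

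There is, however, an exponent slip in your closing line. You correctly record
\[
\bigl\|\,\|x\|^{2n}F\,\bigr\|_{L_{\alpha,q}^{2}}^{1/(2n)}=I_{2n}^{1/(4n)},
\]
but substituting $m=2n$ into $\lim_{m}I_{m}^{1/m}=S$ gives $I_{2n}^{1/(2n)}\to S$, hence $I_{2n}^{1/(4n)}=(I_{2n}^{1/(2n)})^{1/2}\to\sqrt{S}=\sup\{\|x\|:x\in\operatorname{supp}(F)\cap\R_{q,+}^{2}\}$, not $S=\sup\|x\|^{2}$. The cleanest fix is to take $h(x)=\|x\|$ from the start (so that $\|\,\|x\|^{2n}F\|_{L^2}^{2}=I_{4n}$ and the limit comes out as $\sup\|x\|$), or simply to note that the stated right-hand side should read $\sup\|x\|$ rather than $\sup\|x\|^{2}$; the paper's own proof carries the same bookkeeping inconsistency (the displayed exponent $2$ there should be $1/2$), and its subsequent use of the lemma in Theorem~4.1 is only coherent with $\sup\|x\|$ on the right.
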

\begin{proof}
	 The case $F=0$ is trivial, since in this case $supp (F)=\emptyset .$\\ Suppose now that
	$F \neq 0$ and define a measure $m_{\alpha,q}$ on $\mathbb{R}_{q,+}^2$ by
	$$
	d m_{\alpha,q}=\|F\|_{L_{\alpha ,q}^2(\R_{q,+}^{2})}^{-2}|F(x)|^{2} d\mu_{\alpha,q} (x)
	$$
	We have   $m_{\alpha,q}\left(\mathbb{R}^{2}_{q,+}\right)=1$ and
	$$
\Big\lVert	\left\|x\right\|^{2n} F\Big\lVert_{L_{\alpha ,q}^2(\R_{q,+}^{2})}^{\frac{1}{2n}}=\|F\|_{L_{\alpha ,q}^2(\R_{q,+}^{2})}^{\frac{1}{2n}}\Big\lVert\|x\|^2\Big\lVert_{L^{2 n}\left(\mathbb{R}_{q,+}^2, d m_{\alpha,q}\right)}^2.
	$$
	Moreover,
	$$
	\lim _{n \rightarrow+\infty}\Big\lVert\|x\|^2\Big\lVert^2_{L^{2 n}\left(\mathbb{R}_{q,+}^2, d m_{\alpha,q}\right)}=\Big\lVert\|x\|^2\Big\lVert_{L^{\infty}\left(\mathbb{R}_{q,+}^2, d m_{\alpha,q}\right)}
	$$
	and

\begin{align*}
\Big\lVert\|x\|^2\Big\lVert_{L^{\infty}\left(\mathbb{R}_{q,+}^2, d m_{\alpha,q}\right)}&=\sup \Big\{\|x\|^2,\quad  x \in  { supp(m_{\alpha,q}) }\Big\}\\&=\sup \Big\{\|x\|^2,\quad x \in  {supp} \operatorname{(F)}\cap \mathbb{R}_{q,+}^{2}\Big\}.
\end{align*}

	Finally, the fact
	$$
	\lim _{n \rightarrow+\infty}\|F\|_{L_{\alpha ,q}^2(\R_{q,+}^{2})}^{\frac{1}{2n}}=1
	$$
	gives the result.

	\end{proof}
\textbf{Notation :} For  $a>0,$ we denote by $L_{\alpha, q, a}^{2}$ the space of functions in $L_{\alpha, q}^{2}\left(\mathbb{R}_{q,+}^{2}\right)$ with
compact support in $B_{(0,a)} .$
\begin{theorem}  For any  $a\in \mathbb{R}_{q,+},$ the q-Weinstein transform $\F_{W}^{\alpha, q}$ is bijective from $L_{\alpha, q, a}^{2}$
	onto $P W_{q, \alpha, a} .$
\end{theorem}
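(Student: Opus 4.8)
The plan is to reduce the whole statement to three ingredients already available: the Plancherel theorem (so that $\F_W^{\alpha,q}$ is an isometric isomorphism of $L_{\alpha,q}^2(\R_{q,+}^2)$, hence automatically injective), the intertwining relation \eqref{FDlab} together with its dual $\F_W^{\alpha,q}(\|\cdot\|^2 h) = -\triangle_{\alpha,q}\F_W^{\alpha,q}(h)$, and Lemma \ref{2}, which converts the growth of the iterated $q$-Laplacian into the radius of a support. The two inclusions defining bijectivity onto $PW_{q,\alpha,a}$ will then be mirror images of one another under $\F_W^{\alpha,q}$ and $(\F_W^{\alpha,q})^{-1}$.

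For the forward inclusion I would start from $f \in L_{\alpha,q,a}^2$ and put $g = \F_W^{\alpha,q}(f)$. Because $f$ is square integrable with support in the bounded set $B_{(0,a)}$, it also lies in $L_{\alpha,q}^1(\R_{q,+}^2)$, so $g$ is given by the absolutely convergent $q$-integral \eqref{FB}; differentiating under this integral and bounding the $\lambda$-derivatives of the kernel by \eqref{6} (which are uniform in $\lambda$ on bounded sets since $x$ runs over the compact support of $f$) shows that $g$ is $q$-smooth with the required limits at the origin and is even in the last variable (inherited from $j_\alpha$), i.e. $g \in \mathscr{E}_{\ast,q}(\R_q^2)$. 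The heart of the matter is the operator identity obtained by iterating the dual intertwining,
\begin{equation*}
\triangle_{\alpha,q}^n g = (-1)^n \F_W^{\alpha,q}\big(\|\cdot\|^{2n} f\big), \qquad n \in \N,
\end{equation*}
after which the Plancherel isometry \eqref{pldun} gives $\|\triangle_{\alpha,q}^n g\|_{L_{\alpha,q}^2(\R_{q,+}^2)} = \big\|\,\|\cdot\|^{2n} f\,\big\|_{L_{\alpha,q}^2(\R_{q,+}^2)}$. Applying Lemma \ref{2} to $F = f$ identifies the limit of the $\tfrac{1}{2n}$-th powers with the support radius of $f$, which is at most $a$; hence $g \in PW_{q,\alpha,a}$.

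For surjectivity I would run the argument backwards. Given $g \in PW_{q,\alpha,a}$, the Plancherel extension produces $f = (\F_W^{\alpha,q})^{-1}(g) \in L_{\alpha,q}^2(\R_{q,+}^2)$. Iterating the dual intertwining in the form $\|\cdot\|^{2n} f = (-1)^n (\F_W^{\alpha,q})^{-1}(\triangle_{\alpha,q}^n g)$ shows, using that every $\triangle_{\alpha,q}^n g$ lies in $L_{\alpha,q}^2(\R_{q,+}^2)$ by the definition of $PW_{q,\alpha,a}$, that $\|\cdot\|^{2n} f \in L_{\alpha,q}^2(\R_{q,+}^2)$ for all $n$ --- precisely the hypothesis needed to apply Lemma \ref{2} to $F = f$. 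The isometry again equates $\big\|\,\|\cdot\|^{2n} f\,\big\|_{L_{\alpha,q}^2(\R_{q,+}^2)}$ with $\|\triangle_{\alpha,q}^n g\|_{L_{\alpha,q}^2(\R_{q,+}^2)}$, so Lemma \ref{2} turns the bound $\lim_n \|\triangle_{\alpha,q}^n g\|_{L_{\alpha,q}^2(\R_{q,+}^2)}^{1/2n} \leq a$ into the statement that the support radius of $f$ is at most $a$, i.e. $\operatorname{supp}(f) \subset B_{(0,a)}$ and $f \in L_{\alpha,q,a}^2$.

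The step I expect to be the main obstacle is the rigorous justification, in both directions, of the iterated intertwining identity outside the Schwartz class: relations \eqref{FDlab} and its dual are stated for $\mathscr{S}_{\ast,q}(\R_q^2)$, whereas here the functions are only $L_{\alpha,q}^2$-elements subject to support or growth conditions. I would therefore need to verify at each stage that the intermediate functions $\|\cdot\|^{2n} f$ and $\triangle_{\alpha,q}^n g$ genuinely remain in $L_{\alpha,q}^2(\R_{q,+}^2)$ so that the $L^2$-extension of $\F_W^{\alpha,q}$ may legitimately be applied, most likely via a density argument approximating $f$ by elements of $\mathscr{D}_{\ast,a,q}(\R_q^2)$; the quantitative bounds of Lemma \ref{ldd} and Corollary \ref{RD} on iterated $q$-Laplacians and weighted $q$-derivatives supply exactly the uniform control needed to pass to the limit.
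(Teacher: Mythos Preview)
Your proof is correct and follows essentially the same route as the paper: both directions rest on the intertwining identity $\triangle_{\alpha,q}^{n}\,\F_W^{\alpha,q}(f)=(-1)^n\F_W^{\alpha,q}(\|\cdot\|^{2n}f)$, the Plancherel isometry, and Lemma~\ref{2}. The paper does not invoke the density argument or the estimates of Lemma~\ref{ldd} and Corollary~\ref{RD} that you anticipate in your final paragraph---those are reserved for the Schwartz-space theorem of Section~5---and simply applies the intertwining relations directly, so your caution there, while reasonable, goes beyond what the authors deem necessary.
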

\begin{proof}
	Let $f\in L_{\alpha, q, a}^{2}$. \\
From the properties of $\F_{W}^{\alpha, q},$ we get $\F_{W}^{\alpha, q}(f) \in L_{\alpha, q}^{2}(\mathbb{R}_{q,+}^2)$ and by definition of $f,$ we have for
all $n \in \mathbb{N}, t \mapsto \|t\|^{2n} f(t)$ belongs to $L_{\alpha, q}^{1}\left(\mathbb{R}_{q,+}^2\right) \cap L_{\alpha, q}^{2}\left(\mathbb{R}_{q,+}^2\right) .$ Then, a repeated application
of the operator $\triangle_{ \alpha ,q} $ to the $\F_{W}^{\alpha, q}(f)$ gives
\begin{equation}
\left(\triangle_{ \alpha ,q}^{n}\F_{W}^{\alpha, q}( f)\right)(x)=(-1)^{n} \F_{W}^{\alpha, q}\left(\|t\|^{2n} f\right)(x), n=0,1,... . \ldots
\end{equation}
So, the properties of $\F_{W}^{\alpha, q}$ imply that $\F_{W}^{\alpha, q}f \in \mathcal{E}_{*,q}\left(\mathbb{R}_{q}^2\right)$ and for all nonnegative integer $n,$
$\left(\Delta_{\alpha, q}^{n}\F_{W}^{\alpha, q}( f)\right) \in L_{\alpha, q}^{2}\left(\mathbb{R}_{q,+}^2\right) .$ Moreover, the Plancherel theorem gives
\begin{equation}
\left\|\Delta_{\alpha, q}^{n}\F_{W}^{\alpha, q} (f)\right\|_{L_{\alpha, q}^{2}\left(\mathbb{R}_{q,+}^2\right)}^{2}=\Big\lVert \|t\|^{2n} f\Big\lVert_{L_{\alpha, q}^{2}\left(\mathbb{R}_{q,+}^2\right)}^{2}=\int_{\R_{q,+}^2} \|t\|^{4 n}|f(t)|^{2}d\mu_{\alpha,q}(t)
\end{equation}
By using Lemma \ref{2}, we get
\begin{equation}
\lim _{n \rightarrow+\infty}\left\|\left(\Delta_{\alpha, q}^{n} \F_{W}^{\alpha, q}f\right)\right\|_{L_{\alpha, q}^{2}\left(\mathbb{R}_{q,+}^2\right)}^{\frac{1}{2n}}=\sup \Big\lbrace \|\lambda\|^2,\quad {\lambda \in \operatorname{supp}(f) \cap \mathbb{R}_{q,+}^2}\Big\rbrace  \leq a
\end{equation}
and $\F_{W}^{\alpha, q}(f) \in P W_{q, \alpha, a}$\\

$\text { Reciprocally, let } f \in P W_{q, \alpha, a} . \text { We have by the inversion formula }$
\begin{equation}
f(x)={K_{\alpha, q}} \int_{\R_{q,+}^2}( \F_{W}^{\alpha, q})^{-1}(f)(t) \Lambda _{q,x} ^{\alpha }(t)d\mu_{\alpha,q}(t).
\end{equation}
So for all $n\in\N$ we have
\begin{equation}
\left(\Delta_{\alpha, q}^{n} f\right)(x)=(-1)^n{K_{\alpha, q}} \int_{\R_{q,+}^2} \|t\|^{2n}( \F_{W}^{\alpha, q})^{-1}(f)(t) \Lambda _{q,x} ^{\alpha }(t)d\mu_{\alpha,q}(t)
\end{equation}
Using the Plancherel formula, we obtain
$$\Big\Arrowvert\left\|t\right\|^{2n} ( \F_{W}^{\alpha, q})^{-1}(f)\Big\Arrowvert _{L^2_{\alpha, q}(\R_{q,+}^2)}=\left\|\Delta_{\alpha, q}^{n}(f)\right\|_{L^2_{\alpha, q}(\R_{q,+}^2)}<\infty.$$
 Then for
 all $n \in \mathbb{N}, t \mapsto \|t\|^{2n} ( \F_{W}^{\alpha, q})^{-1}(f)(t)$ belongs to $L_{\alpha, q}^{2}\left(\mathbb{R}_{q,+}^2\right).$
  \\Finally, by Lemma $2,$ we deduce that $( \F_{W}^{\alpha, q})^{-1}(f)\in L_{\alpha, q, a}^{2}.$

\end{proof}
\begin{remark}
 We have for all $a \in \mathbb{R}_{q,+}$
$$
P W_{q, \alpha, a}=\left\{f: \forall x \in \mathbb{R}_{q}^{2}, f(x)=\frac{K_{\alpha, q}}{2} \int_{0}^{a}\int_{-a}^{a} g(t) \Lambda^{\alpha}_{q,x}d\mu_{\alpha,q}(t), \quad g \in L_{\alpha, q, a}^{2}\right\}
$$
Then, any element of $P W_{q, \alpha, a}$ is extendable to an entire function on $\mathbb{C}^2$ of exponential type.
\end{remark}

\section{REAL PALEY-WIENER THEOREM FOR FUNCTIONS IN THE $q$-SCHWARTZ SPACE}
For $m \in \mathbb{N},$ we define the real Paley-Wiener space $P W_{\alpha, {q}}^{m}$ by
\begin{equation}
P W_{\alpha, q}^{m}=\left\{f \in \mathscr S_{*,q}\left(\mathbb{R}_{q}^{2}\right): \exists a \in \mathbb{R}_{+}, \text { such that } \sup _{x\in \mathbb{R}_{q}^{2}, n \in \mathbb{N}\\ ,n \geq m} a^{-2n} B_{n, m, q}(1+\|x\|^{2})^{m}\left|\Delta_{\alpha, q}^{n} f(x)\right|<\infty\right\}
\end{equation}where $B_{n, m, q}=\Big(\frac{(1-q)^{2m}}{\left(q^{2n} ; q^{-1}\right)_{2m}}\Big)^{2}$
\begin{theorem}
 For $m \in \mathbb{N}, m> \alpha+\frac{3}{2},$ the $q$-Weinstein transform $\F_{W}^{\alpha, q}$ is a bijection
from $\mathscr{D}_{*,q}\left(\mathbb{R}_{q}^2\right)$ onto $P W_{\alpha, q}^{m}$
\end{theorem}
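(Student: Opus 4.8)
The plan is to prove the theorem in two directions, establishing that $\F_W^{\alpha,q}$ maps $\mathscr{D}_{*,q}(\R_q^2)$ into $PW_{\alpha,q}^m$ and that it admits an inverse mapping $PW_{\alpha,q}^m$ back into $\mathscr{D}_{*,q}(\R_q^2)$. The forward direction should come largely from the estimates already assembled in the preliminaries. Given $f \in \mathscr{D}_{*,q}(\R_q^2)$, say supported in $B_{(0,R)}$, I would apply the transform and use the intertwining relation \eqref{FDlab} in the form
\begin{equation*}
\Delta_{\alpha,q}^{n}\F_W^{\alpha,q}(f)(\lambda) = (-1)^n \F_W^{\alpha,q}\bigl(\|t\|^{2n} f\bigr)(\lambda),
\end{equation*}
so that controlling $\Delta_{\alpha,q}^n \F_W^{\alpha,q}(f)$ reduces to controlling $\F_W^{\alpha,q}(\|t\|^{2n} f)$. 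To extract the factor $(1+\|x\|^2)^m$ I would use the dual relation \eqref{9}, multiplying by $\|\lambda\|^{2m}$ under the transform turns into applying a differential operator of order $2m$ to $\|t\|^{2n} f$ on the other side; this is exactly the object estimated in Corollary \ref{RD}, which produces the sequence $(q^{2n};q^{-1})_{2p}$ whose reciprocal defines $B_{n,m,q}$. Combining the $L^1$-to-$L^\infty$ bound \eqref{3.12} with the Corollary \ref{RD} estimate should yield precisely the weighted supremum bound defining $PW_{\alpha,q}^m$, with the hypothesis $m > \alpha + \tfrac{3}{2}$ entering to guarantee that the measure $d\mu_{\alpha,q}$ integrates $(1+\|t\|^2)^{-m}$ over $B_{(0,R)}$, i.e. to make the relevant $L^1$-norm finite.

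\textbf{For the reverse direction} I would take $g \in PW_{\alpha,q}^m$ and show that $h := (\F_W^{\alpha,q})^{-1}(g)$ is compactly supported and $q$-smooth. Since $g \in \mathscr{S}_{*,q}(\R_q^2)$ by definition and the Plancherel theorem makes $(\F_W^{\alpha,q})^{-1}$ an isomorphism of $\mathscr{S}_{*,q}(\R_q^2)$, we immediately get $h \in \mathscr{S}_{*,q}(\R_q^2)$. The real work is showing $\operatorname{supp}(h) \subset B_{(0,a)}$ for the constant $a$ provided by membership in $PW_{\alpha,q}^m$. I would invoke Lemma \ref{2}: applying the intertwining relation in the inverse direction gives
\begin{equation*}
\bigl\|\,\|t\|^{2n} h\,\bigr\|_{L^2_{\alpha,q}(\R_{q,+}^2)} = \bigl\|\Delta_{\alpha,q}^n g\bigr\|_{L^2_{\alpha,q}(\R_{q,+}^2)},
\end{equation*}
and the defining bound of $PW_{\alpha,q}^m$ controls the right-hand side by a constant times $a^{2n} B_{n,m,q}^{-1}$ after integrating the pointwise estimate against $(1+\|x\|^2)^{-m}\,d\mu_{\alpha,q}$. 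Taking $2n$-th roots and passing to the limit, Lemma \ref{2} then identifies $\sup\{\|x\|^2 : x \in \operatorname{supp}(h)\}$ with a quantity bounded by $a^2$, forcing $\operatorname{supp}(h) \subset B_{(0,a)}$.

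\textbf{The main obstacle} I anticipate is tracking the $q$-factorial weight $B_{n,m,q} = \bigl((1-q)^{2m}/(q^{2n};q^{-1})_{2m}\bigr)^2$ with enough precision that the growth rates match on both sides. In the classical Paley-Wiener theorem the analogous weight is a clean power of $n!$; here the $q$-deformation replaces factorials by $q$-shifted factorials, and one must verify that $\lim_{n\to\infty}\bigl(B_{n,m,q}\bigr)^{1/2n}$ and $\lim_{n\to\infty}\bigl((q^{2n};q^{-1})_{2m}\bigr)^{1/2n}$ behave compatibly with the $a^{-2n}$ scaling so that the two limits in the forward and reverse arguments genuinely coincide. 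In particular, the monotonicity fact used at the end of Corollary \ref{RD} (that $((q^n;q^{-1})_p)_{n>p}$ is increasing) will need to be supplemented by an asymptotic estimate of $(q^{2n};q^{-1})_{2m}$ as $n \to \infty$ to confirm it does not corrupt the radical limit. I would handle this by isolating the dominant $q$-power in $(q^{2n};q^{-1})_{2m}$ and checking that its $2n$-th root tends to $1$, leaving the limit governed solely by the $a^{2n}$ factor, exactly as required to close the equivalence.
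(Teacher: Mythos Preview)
Your forward direction is essentially the paper's argument, but you have misplaced the role of the hypothesis $m>\alpha+\tfrac32$. In that direction the integrals arising are over the compact ball $B_{+(0,q^{-2p}R)}$ (since $f$ has support in $B_{(0,R)}$), so no integrability hypothesis on $m$ is needed; the paper simply bounds $\int_{B_{+(0,q^{-2p}R)}} t_2^{2\alpha+1}\,d_qt_1\,d_qt_2$ by $(q^{-2p}R)^{2\alpha+3}$ and sums over $p\le m$.

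Your reverse direction is genuinely different from the paper's. The paper does \emph{not} invoke Lemma~\ref{2}; instead it argues pointwise: for $\|x\|>a$ one writes
\[
\bigl(\F_W^{\alpha,q}\bigr)^{-1}(g)(x)=\Bigl(\frac{-1}{\|x\|^{2}}\Bigr)^{n} K_{\alpha,q}\int_{\R_{q,+}^2}\Delta_{\alpha,q}^{n}g(t)\,\Lambda^{\alpha}_{q,-x}(t)\,d\mu_{\alpha,q}(t),
\]
inserts the $PW_{\alpha,q}^m$ bound on $\Delta_{\alpha,q}^{n}g$ together with \eqref{7}, and lets $n\to\infty$. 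This is precisely where $m>\alpha+\tfrac32$ is actually used: it guarantees $\int_{\R_{q,+}^2}(1+\|t\|^2)^{-m}t_2^{2\alpha+1}\,d_qt_1\,d_qt_2<\infty$. Your $L^2$/Lemma~\ref{2} route also works, but it requires only the weaker condition $2m>\alpha+\tfrac32$ (for the $L^2$ norm of $(1+\|x\|^2)^{-m}$) and is more circuitous; the paper's pointwise argument explains why the stated threshold $m>\alpha+\tfrac32$ appears in the theorem.

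Finally, the obstacle you flag concerning $B_{n,m,q}^{1/2n}$ is not a genuine difficulty: since $(q^{2n};q^{-1})_{2m}=\prod_{k=0}^{2m-1}(1-q^{2n-k})\to 1$ as $n\to\infty$, the sequence $B_{n,m,q}^{-1}$ converges to the constant $(1-q)^{-4m}$, so both $(B_{n,m,q})^{1/2n}\to 1$ in your argument and $(a/\|x\|)^{2n}B_{n,m,q}^{-1}\to 0$ in the paper's argument are immediate.
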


\begin{proof}
Let $f \in P W_{a, q}^{m} .$ There exist a positive real $a$ and a constant $C_{a, m}$ such that for all $x \in \mathbb{R}_{q}^2$ and all integer $n \geq m$
$$
\left|\Delta_{\alpha, q}^{n} f(x)\right| \leq C_{a, m} a^{2n} \frac{1}{B_{n, m, q}} \frac{1}{(1+\|x\|^2)^{m}}
$$
Consider $x \in \mathbb{R}_{q,+}^2$ outside of $B_{(0,a)}.$ We have
$$
\Big(\F_{W}^{\alpha, q}\Big)^{-1}\left(\Delta_{\alpha, q}^{n} f\right)(x)=(-1)^n \|x\|^{2n} \Big(\F_{W}^{\alpha, q}\Big)^{-1}(f)(x)=(-1)^n \|x\|^{2n} \F_{W}^{\alpha, q}(f)(-x)
$$
So
\begin{align*}
\Big|\F_{W}^{\alpha, q}(f)(-x)\Big|&=\Big|\Big(\frac{-1}{\|x\|^2}\Big)^{n} {K_{\alpha, q}} \int_{\R_{q,+}^2} \Delta_{\alpha, q}^{n} f(t) \Lambda^{\alpha}_{q,-x}(t)d\mu_{\alpha,q}(t)\Big|\\
& \leq \frac{}{}\frac{ 4C_{a, m}K_{\alpha, q}}{(q;q)_\infty^2B_{n, m, q}} \Big(\frac{a}{\|x\|}\Big)^{2n}  \int_{\R_{q,+}^2} \frac{t_2^{2\alpha+1}}{(1+\|t\|^2)^{m}}d_qt_1d_qt_2
\end{align*}
since $\|x\|>a,$ this last quantity clearly approaches zero as $n$ tends to $+\infty,$ it follows
that $\operatorname{supp}\left(\F_{W}^{\alpha, q}\right)^{-1}(f) \subset B_{(0,a)} .$
Finally, since $\F_{W}^{\alpha, q}$ is an isomorphism from $\mathscr S_{*,q}\left(\mathbb{R}_{q}^2\right)$ onto itself and $f \in \mathscr{S}_{*,q}\left(\mathbb{R}_{q}^2\right),$ we obitain
$\left(\F_{W}^{\alpha, q}\right)^{-1}(f) \in \mathscr{S}_{*,q}\left(\mathbb{R}_{q}^2\right),$ which implies that $\left(\F_{W}^{\alpha, q}\right)^{-1}(f) \in \mathscr{D}_{*,a,q}(\mathbb{R}_{q}^2) \subset \mathscr{D}_{*,q}\left(\mathbb{R}_{q}^2\right)$.

Conversely, let $f \in \mathscr{D}_{q}\left(\mathbb{R}_{q}^2\right) .$ There exists then $R \in \mathbb{R}_{q,+}$ such that
 $supp (f) \subset B_{(0,R)}.$ We have for $x \in \mathbb{R}_{q}^2$ and integer $n \geq m$
 $$
 \Delta_{\alpha, q}^{n}\left(\F_{W}^{\alpha, q}(f)\right)(x)=(-1)^{n} {K_{\alpha, q}} \int_{\R_{q,+}^2} \|t\|^{2n} f(t) \Lambda^{\alpha}_{q,x}(t)d\mu_{\alpha,q}(t)
 $$
 Then, from the properties of the $q$-Weinstein operator $\Delta_{\alpha, q},$ we have for $p \leq m$
 \begin{align*}
\Big| \|x\|^{2p}\Delta_{\alpha, q}^{n}\left(\F_{W}^{\alpha, q}(f)\right)(x)\Big|&=\Big|(-1)^{n+p} {K_{\alpha, q}} \int_{\R_{q,+}^2}\Delta_{\alpha, q}^{p}\Big( \|t\|^{2n} f(t)\Big) \Lambda^{\alpha}_{q,x}(t)d\mu_{\alpha,q}(t)\Big|\\
&\leq \frac{4K_{\alpha, q}}{(q;q)_\infty^2} \int_{\R_{q,+}^2}\Big|\Delta_{\alpha, q}^{p}\Big( \|t\|^{2n} f(t)\Big) \Big|d\mu_{\alpha,q}(t)
\end{align*}
 So, from the Lemma \ref{ldd} and the Corollary \ref{RD}, it exists  positive constant, $C_{p,R,\alpha}$, independent of $n$  such that
 \begin{align*}
\Big| \|x\|^{2p}\Delta_{\alpha, q}^{n}\left(\F_{W}^{\alpha, q}(f)\right)(x)\Big|&\leq C_{p,R,\alpha}\Big(\frac{R}{q^{4p}}\Big)^{2n}\Big(\frac{(q^{2n};q^{-1})_{2p}}{(1-q)^{2p}}\Big)^{2} \int_{B_{+(0,q^{-2p}R)}}t_2^{2\alpha+1}d_qt_2d_qt_1\\
&\leq (q^{-2p}R)^{2\alpha+3}C_{p,R,\alpha}\Big(\frac{R}{q^{4m}}\Big)^{2n}\Big(\frac{(q^{2n};q^{-1})_{2m}}{(1-q)^{2m}}\Big)^{2}.
\end{align*}
 Now, taking $$a=\frac{R}{q^{4m}}, \text{ and }  C_{a, m}= \sum_{p=0}^{m} \binom{m}{p} (q^{2p}R)^{2\alpha+3}C_{p,R,\alpha},$$ we obtain
 \begin{align*}
\Big| \Delta_{\alpha, q}^{n}\left(\F_{W}^{\alpha, q}(f)\right)(x)\Big|(1+\|x\|^2)^{m}
 \leq C_{a, m} a^{2n} \frac{1}{B_{n, m, q}}
\end{align*}
	\end{proof}
\textbf{{Example:}} In Section 3.2 of \cite{j} it is shown that
 for $\alpha>-1 / 2$ and $p \geqslant 1,$ the $q-j_{\alpha+p}$ Bessel function has the $q$ -integral representation of Sonine type
$$
j_{\alpha+p}\left(y ; q^{2}\right)=\int_{0}^{1}  W_{p-1}\left(t ; q^{2}\right) j_{\alpha}\left(y t ; q^{2}\right) t^{2 \alpha+1}d_{q} t
$$
 where
$$
W_{p-1}\left(x ; q^{2}\right)=\frac{\left(x^{2} q^{2} ; q^{2}\right)_{\infty}}{\left(x^{2} q^{2 p-1} ; q^{2}\right)_{\infty}}
$$

So,
\begin{align*}
  \Lambda^{\alpha+p}_{q}(x,y)&=e(-ix;q^{2})j_{\alpha+p}(y;q^{2})\\
   &=\int_{-1}^{1}\int_{0}^{1} W_{p-1}\left(t_2 ; q^{2}\right)\Lambda^{\alpha}_{q}(t_1x,yt_2) \delta_{1}(t_{1}) d\mu_{\alpha,q}(t_1,t_2)
\end{align*}
where
\begin{equation*}
\forall  t \in\mathbb{R}_{q}, \quad \delta_{1}(t)=\left\{
\begin{array}{ccc}
\frac{1}{1-q}, &if& t=1, \\
0, & &ifnot.
\end{array}
\right.
\end{equation*}
As a result, $(x,y)\longmapsto\Lambda^{\alpha+p}_{q}(x,y) \in P W_{q, 1}^{\alpha}$ and satisfies
 \begin{align*}
\Big| \Delta_{\alpha, q}^{n}\left(\Lambda^{\alpha+p}_{q}\right)(x,y)\Big|
\leq  \frac{C_{1, m}}{B_{n, m, q}}\frac{1}{(1+\|(x,y)\|^2)^{m}}.
\end{align*}

\end{document}